\theoremstyle{definition} 
\newtheorem{theorem}{Theorem}[section]
\newtheorem{corollary}{Corollary}[section]
\newtheorem{lemma}{Lemma}[section]
\newtheorem{proposition}{Proposition}[section]
\newtheorem{example}{Example}[section]
\newtheorem{remark}{Remark}[section]
\newtheorem{conjecture}{Conjecture}[section]
\newcommand{\sn}{\mathrm{sn}}
\DeclareMathOperator{\val}{val}
\DeclareMathOperator{\gon}{gon}
\DeclareMathOperator{\scw}{scw}
\DeclareMathOperator{\lcm}{lcm}
\title{Chip-Firing Games on Banana Trees}
\author{
  Marchelle Beougher, Nila Cibu, Kexin Ding, Steven DiSilvio, Kristin Heysse, \and Sasha Kononova, Chan Lee, Ralph Morrison, Krish Singal
}
\date{} 
\begin{document}

\maketitle

\begin{abstract}
    We study chip-firing games on multigraphs whose underlying simple graphs are trees, paths, and stars, denoted as banana trees, paths, and stars respectively. We present a polynomial time algorithm to compute the divisorial gonality of banana paths, and give combinatorial formulas for the related invariants of scramble number and screewidth for any banana tree.  Furthermore, we leverage banana paths to show that gonality can increase or decrease by an arbitrary amount upon deletion of a single edge, even when the resulting graph is connected. Lastly, we study banana trees and Brill-Noether theory to prove that the gonality conjecture holds for all banana trees. 
\end{abstract}

\section{Introduction}

 As pioneered by Baker and Norine \cite{bn}, chip-firing games on graphs provide a discrete analog of divisor theory on algebraic curves.  For instance, graph gonality serves as a combinatorial tool for studying the gonality of algebraic curves, which in both contexts be defined as the minimum degree of a positive rank divisor. Accordingly, computing the gonalities of various families of graphs has provided algebraic geometers with more graph-theoretic tools for finding the gonalities of associated algebraic curves. To this end, we focus on a particularly versatile family of graphs called \emph{banana trees}.

\begin{figure}[hbt]
    \centering
\includegraphics{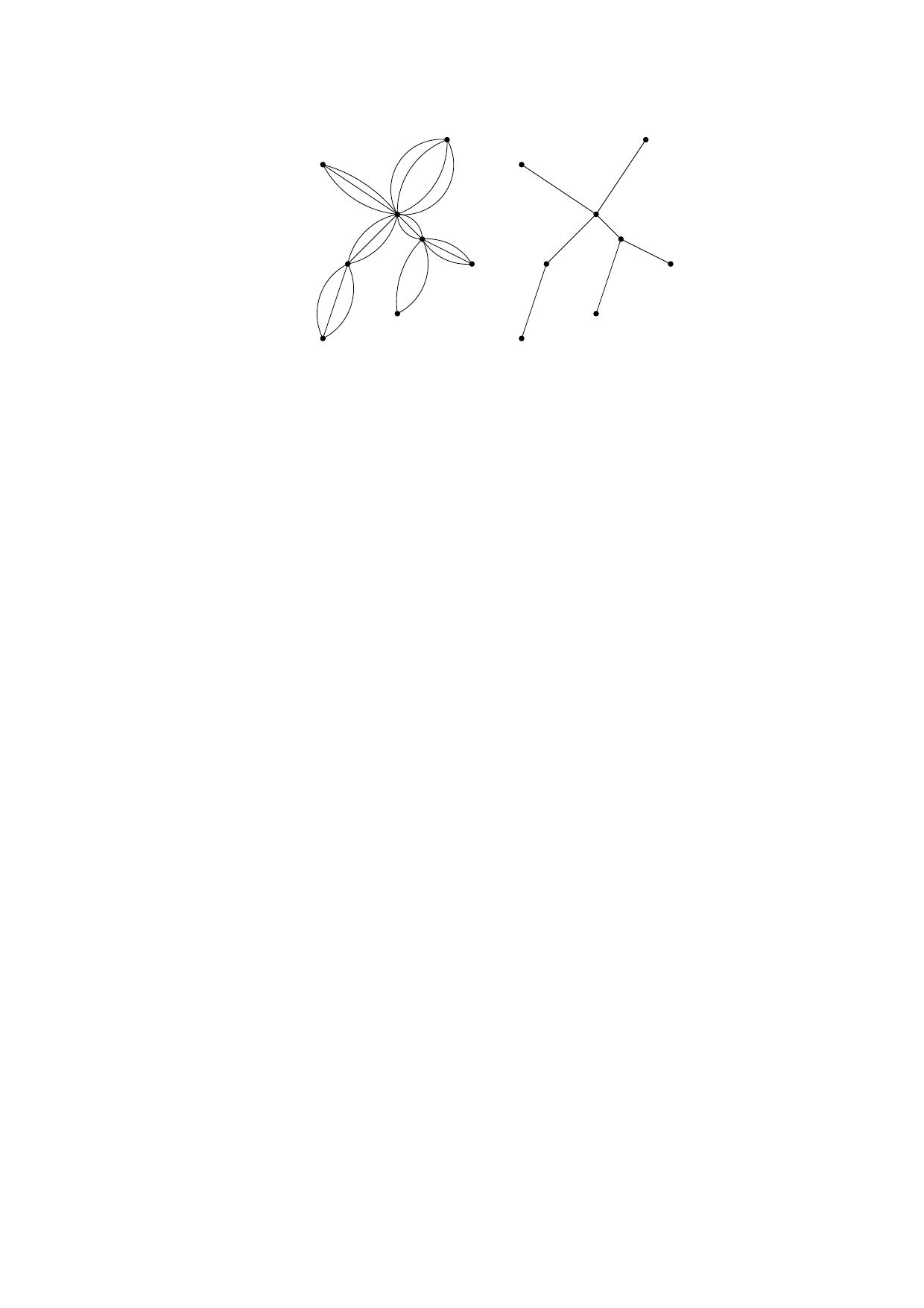}
    \caption{A banana tree on the left, and its underlying simple graph on the right}
    \label{figure:banana_tree_example}
\end{figure}

Banana trees are multigraphs whose underlying simple graph is a tree. An example of a banana tree, along with its underlying simple graph, is illustrated in Figure \ref{figure:banana_tree_example}. In the case that the underlying tree is a path, we call the multigraph a \emph{banana path}; these have previously been referred to to as generalized banana graphs. They form a structurally simple family of graphs with surprisingly rich properties concerning divisor theory on graphs. Banana paths were leveraged in \cite{gonseq} to completely determine for which integers \(m\) and \(n\) there exists a graph \(G\) with first gonality \(m\) and second gonality \(n\). Later work in \cite{semigroup_gonality_sequences} used banana paths to push these results further by providing a partial classification for achievable first, second, and third gonalities. These applications of banana trees are a testament to their versatility and utility in divisor theory, warranting a deeper study of these graphs.

Our first major result is a polynomial-time algorithm for computing the gonality of a banana path
\begin{theorem}\label{theorem:polytime_gonality} The gonality of a banana path on $n+1$ vertices can be computed in $O(n^3)$ time.
\end{theorem}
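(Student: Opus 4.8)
The plan is to pass from the rank condition to a purely combinatorial optimization over divisors in reduced form, and then to solve that optimization by a dynamic program running along the path. Write the banana path as $v_0,v_1,\dots,v_n$ with $b_i$ parallel edges joining $v_{i-1}$ and $v_i$. A divisor $D$ has positive rank exactly when, for every vertex $v_j$, the $v_j$-reduced divisor equivalent to $D$ carries at least one chip on $v_j$, so the first step is to compute these reduced forms explicitly. Running Dhar's burning algorithm from $v_j$ on a path burns rightward and leftward independently: a vertex $v_i$ with $i<j$ burns once the $b_{i+1}$ edges on its right are burnt, while a vertex $v_i$ with $i>j$ burns once its $b_i$ left edges are burnt. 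Hence $D$ is $v_j$-reduced iff $0\le D(v_i)\le b_{i+1}-1$ for $i<j$ and $0\le D(v_i)\le b_i-1$ for $i>j$, and the $v_j$-reduced representative is obtained from any effective $D$ by a left-to-right sweep that pushes the overflow past each banana (replacing the load at a vertex by its residue modulo the relevant $b_i$ and firing the rest across) together with the mirror-image right-to-left sweep.

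I would record the outcome of these sweeps as two inflow functions. Starting from the $v_0$-reduced form $D_0$ with $D_0(v_i)=a_i$ (so $a_0\ge 0$ and $0\le a_i\le b_i-1$ for $i\ge 1$), let $L_j$ be the number of chips delivered to $v_j$ by the rightward sweep and $R_j$ the number delivered by the leftward sweep; each is governed by a floor recursion of the form $L_{i}=b_{i}\lfloor (a_{i-1}+L_{i-1})/b_{i}\rfloor$, and symmetrically for $R$. The characterization above then becomes the clean statement that $D$ has positive rank if and only if $a_j+L_j+R_j\ge 1$ for every $j$. The gonality is therefore the minimum of $\sum_i a_i$ over all admissible vectors $(a_0,\dots,a_n)$ satisfying these $n+1$ inequalities, and I would first verify this reformulation carefully, since it converts a statement about infinitely many linearly equivalent divisors into a finite program with bounded variables $0\le a_i\le b_i-1$.

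To solve the program I would read each inequality $a_j+L_j+R_j\ge 1$ as a covering condition: vertex $v_j$ is served either by a chip of its own, or by rightward flow that survives all the way from some source on its left, or by leftward flow from some source on its right. Because the flow recursion is monotone and its only memory is the current carry, the minimal number of chips needed to push surviving flow from a vertex $v_\ell$ across the bananas $b_{\ell+1},\dots,b_r$ can be precomputed for every pair $(\ell,r)$ by a backward recursion; these quantities let me describe covering by flow as covering by intervals with known costs. A dynamic program that scans $v_0,\dots,v_n$ while tracking the relevant carry value—which I would argue ranges over only polynomially many meaningful thresholds, essentially the values attainable by the floor recursion—then computes the cheapest admissible placement. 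Filling an $O(n)\times O(n)$ table with $O(n)$ work per cell yields the claimed $O(n^3)$ bound.

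The main obstacle is the middle of this argument: controlling the interaction of the two sweeps and the floor recursions so that the optimization genuinely decomposes. The floors mean that flow does not survive linearly—pushing a chip across a banana of high multiplicity can require disproportionately many chips upstream, and chips placed at interior vertices can revive a dying flow—so the delicate points are (i) proving that in an optimal solution each vertex is served from a single side, so the interval-covering formulation is lossless, and (ii) bounding the number of distinct carry values the dynamic program must remember, which is what keeps the state space, and hence the running time, polynomial rather than merely pseudopolynomial in the multiplicities $b_i$.
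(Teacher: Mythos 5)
Your approach is essentially correct, and while it is packaged differently from the paper's, the two algorithms are close cousins. The paper defines $f(G,v,k)$ as the minimum number of additional chips needed for positive rank given $k$ free chips on a leaf $v$, proves a two-case recursion for it (Proposition \ref{prop:recursion_step_one}) by analyzing $v$- and $w$-reduced divisors, and memoizes over states (position, $k$) with $k\le n^2+1$. Your forward scan tracking the inflow $L_j$ is the mirror image of this: your state (position, carry) is precisely the paper's (position, $k$), and your floor recursion $L_{i}=b_{i}\lfloor (a_{i-1}+L_{i-1})/b_{i}\rfloor$ is the same arithmetic that produces the $a\lfloor k/a\rfloor$ and $a\lceil k/a\rceil$ terms in the paper's recursion. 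What your route buys is an explicit, checkable characterization of positive rank for $v_0$-reduced divisors via Dhar's algorithm, which makes correctness of the DP transparent; what the paper's route buys is that the recursion is stated for a single leaf of a general banana tree, and the complexity bookkeeping (splitting at edge bunches of size $>n$, bounding $k$ by $n^2+1$) is already done.

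The two ``delicate points'' you flag at the end both evaporate once the formulation is tightened, and you should not leave them open. (i) With your own parametrization by the $v_0$-reduced representative, the condition for $D$ to be $v_j$-reduced on the vertices $v_i$ with $i>j$ is $D(v_i)\le b_i-1$, which is literally the $v_0$-reduced condition; hence the right-hand portion of $D_0$ is already in $v_j$-reduced position, no chips ever flow leftward, and $R_j\equiv 0$. Every vertex is served from the left or by its own chip, so there is no two-sided interaction to control and no need for the interval-covering reformulation at all --- which is good, because that reformulation is genuinely lossy for the reason you identify (interior chips both revive the flow and cover their own vertices, so interval costs are not additive). Work directly with the DP on states $(j,L_j)$ instead. (ii) The carry is bounded trivially: $\gon(G)\le n+1$, so you may restrict to $\sum_i a_i\le n+1$, whence $L_j\le n+1$ for all $j$ and each $a_j$ ranges over at most $n+2$ useful values; this gives $O(n)\cdot O(n)$ states with $O(n)$ transitions each, i.e.\ $O(n^3)$, with no need to analyze ``attainable thresholds'' of the floor recursion. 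With these two observations supplied, your proof is complete.
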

Our proof relies on a recursion for a slight generalization of gonality combined with dynamic programming.
This result is in contrast to computing gonality for general graphs, which is known to be an NP-complete problem~\cite{gijswijt2019computing}.

Our next major result leverages banana paths to study how gonality can behave under taking subgraphs.  It has already been noted that gonality is not subgraph monotone:  it is possible for gonality to increase by arbitrarily large amounts when taking subgraphs \cite{sparse}.  We strengthen this result by proving that gonality can increase by any precise amount upon the deletion of a single edge.

\begin{theorem}\label{theorem:peeling_edges}
    For every integer \(r\geq 1\), there exists a connected graph $G$ with an edge $e$ such that deleting $e$ preserves connectivity and increases the gonality of $G$ by exactly \(r\):
    \[\gon(G-e)=\gon(G)+r.\]
\end{theorem}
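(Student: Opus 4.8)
The plan is to realize both $G$ and $G-e$ as explicit banana paths on four vertices and to read off their gonalities, exploiting the phenomenon (already visible in small cases) that reducing a single interior multiplicity can force a travelling pile of chips to have a size divisible by a smaller number, and hence to be larger. Concretely, I would fix $b=r+2$ and let $G$ be the banana path on vertices $v_0,v_1,v_2,v_3$ with $b$ parallel edges in each of the three bananas, i.e.\ multiplicity vector $(b,b,b)$. I take $e$ to be a single edge of the middle banana, so that $G-e$ is the banana path $(b,b-1,b)$; since the middle banana still has $b-1\ge 2$ edges, $e$ is not a bridge and $G-e$ remains connected.

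For the upper bounds I would exhibit explicit positive-rank divisors obtained by ``sweeping'' a single pile of chips from one end of the path to the other. For $G=(b,b,b)$ the divisor $b\cdot v_1$ has rank $1$: a pile of exactly $b$ chips transfers cleanly across each multiplicity-$b$ banana (fire the appropriate initial segment of the path), so $b\cdot v_0 \sim b\cdot v_1 \sim b\cdot v_2 \sim b\cdot v_3$ and every vertex is reached; hence $\gon(G)\le b$. For $G-e=(b,b-1,b)$ I would instead use the pile $2(b-1)\cdot v_1$: its size is a multiple of the middle multiplicity $b-1$, so it transfers completely across the middle banana, and it is at least $b$, so it can also cross the two outer bananas and reach $v_0$ and $v_3$; hence $\gon(G-e)\le 2(b-1)$.

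The crux is the matching lower bounds, and in particular $\gon(G-e)\ge 2(b-1)$. The intuition is a divisibility obstruction: to reach the degree-$b$ leaves $v_0$ and $v_3$ one must be able to marshal $b$ chips behind $v_1$ and behind $v_2$ respectively, while chips can be moved across the middle banana of multiplicity $b-1$ only in batches of $b-1$, so a concentrated pile can be transported from the $v_1$-side to the $v_2$-side only if its size is divisible by $b-1$; the least integer that is both $\ge b$ and divisible by $b-1$ is $2(b-1)$. Making this rigorous for arbitrary, not necessarily concentrated, divisors is where I expect the real work to be: naive edge-cut or scramble-number estimates only detect the minimum cut $b-1$ and therefore cannot certify a bound as large as $2(b-1)$. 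I would instead invoke the recursion for generalized gonality developed for Theorem~\ref{theorem:polytime_gonality}, which tracks precisely the minimal chip configurations able to cross each banana and yields $\gon(b,b-1,b)=2(b-1)$ and $\gon(b,b,b)=b$; alternatively, for any fixed $r$ these two values can simply be computed by the $O(n^3)$ algorithm of that theorem.

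Granting the two exact gonalities, the conclusion is immediate: $\gon(G-e)-\gon(G)=2(b-1)-b=b-2=r$, so $\gon(G-e)=\gon(G)+r$ with $G-e$ connected, as required. The main obstacle, to reiterate, is the lower bound $\gon(G-e)\ge 2(b-1)$, i.e.\ showing that the divisibility obstruction genuinely raises the gonality and is not circumvented by a cleverer, spread-out divisor; everything else reduces to the explicit sweeping divisors above.
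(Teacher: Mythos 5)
Your construction cannot work, and the failure is located exactly at the point you flagged as ``where I expect the real work to be.'' On any graph with $m$ vertices, the divisor placing one chip on every vertex is effective and has positive rank, so the gonality is at most $m$. Your graphs $B_{(b,b,b)}$ and $B_{(b,b-1,b)}$ have only four vertices, so both have gonality at most $4$ regardless of $b$. In particular the values you hope to certify, $\gon(B_{(b,b,b)})=b$ and $\gon(B_{(b,b-1,b)})=2(b-1)$ with $b=r+2$, are false for all $r\geq 3$. The recursion of Proposition \ref{prop:recursion_step_one}, which you propose to invoke for the lower bounds, in fact returns $f(B_{(b,b,b)},0)=\min\{4,b\}$ and $f(B_{(b,b-1,b)},0)=4$ for $b\geq 3$ (the branch ``$1+f(G',w,0)$'' is exactly the one-chip-per-vertex strategy), so the increase your example produces is $\max\{0,\,4-b\}$: it equals $1$ when $b=3$ and equals $0$ for every $b\geq 4$. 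The divisibility obstruction you describe is real only for \emph{concentrated} piles; a spread-out divisor circumvents it entirely on a short path, and no lower-bound technique can rescue a claim that is simply not true.

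The paper's proof repairs precisely this defect by making the path long enough that spreading out is too expensive. It takes an alternating $(a,b)$-monoculture on $n+1=4(ab)^2+2$ vertices, so that Lemma \ref{lemma:lcm_string} applies to each half and forces \emph{every} positive-rank divisor to have an equivalent representative concentrating $\lcm(a,b)=ab$ chips on the left of the deleted edge, and another concentrating $ab$ chips on the right; comparing these two representatives shows that many chips must actually cross the weakened central bunch, in increments of $a-1$. The parameters $a$ and $b$ are then chosen (with $r\leq\lfloor (a-1)/2\rfloor$, $r\nmid a-1$, and $b$ a prime with $b\equiv r \bmod (a-1)$, via Dirichlet) so that the residue of $ab$ modulo $a-1$ forces an overhead of exactly $r$ extra chips. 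If you want to salvage your two-value idea, you must replace your four-vertex path by a sufficiently long one and prove a concentration statement of the type of Lemma \ref{lemma:lcm_string}; without that ingredient the lower bound $\gon(G-e)\geq \gon(G)+r$ is unobtainable.
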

We also consider banana trees from the perspective of Brill-Noether theory.  One of the longest-standing open conjectures regarding divisor theory on graphs is the Brill-Noether conjecture, which in the rank one case states that gonality of a graph is can be upper bounded by a linear function of the graph's genus \(g\), where \(g = |E| - |V| + 1\). Namely:
\[\gon(G)\leq \left\lfloor \frac{g+3}{2}\right\rfloor\]
Recent work in \cite{discrete_metric_different} has provided evidence against this conjecture, so it is natural to search for counterexamples.  Sadly, banana trees will not provide them.

\begin{theorem}\label{theorem:brill_noether_bound} 
    For any banana tree \(G\) of genus \(g\), we have
    \[\gon(G)\leq \left\lfloor \frac{g+3}{2}\right\rfloor,\]
    with a necessary condition for equality being that all edge bunches have size at most $4$.  Moreover, only finitely many bridgeless banana trees achieve equality.
\end{theorem}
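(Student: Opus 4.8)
The plan is to prove the inequality by constructing an explicit positive-rank divisor of controlled degree, and then to read off the equality analysis from the slack in that construction. Throughout I write $g=|E|-|V|+1=\sum_i (m_i-1)$, where the sum runs over the bunches of $G$ and $m_i$ is the multiplicity of the $i$-th bunch; this identity is what converts the genus bound into a statement about the multiplicities. Two easy upper bounds frame the problem but neither suffices alone: placing one chip on every vertex gives a positive-rank divisor, so $\gon(G)\le |V|$, and when all bunches share a common multiplicity $M$ one may slide $M$ chips along the tree. The one-chip-per-vertex bound is wasteful when multiplicities are small (e.g.\ every bunch doubled, where the true gonality is $2$), while pure sliding breaks down as soon as multiplicities vary, since crossing a bunch of multiplicity $m_i$ costs exactly $m_i$ chips on the boundary vertex and chips become stranded when the counts do not match.

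The heart of the argument is therefore a single construction that interpolates between these extremes by \emph{reusing} chips. I would build the divisor by processing the underlying tree one leaf-bunch at a time---exactly the leaf-stripping structure underlying the dynamic program of Theorem~\ref{theorem:polytime_gonality}---maintaining the invariant that, after incorporating a subtree of partial genus $g'$, the partial divisor has degree at most $\lfloor (g'+3)/2\rfloor$ (minus an explicit nonnegative penalty, see below). The phenomenon to exploit is that a low-multiplicity bunch lets the active chips be pushed across and re-used on the far side, so that small bunches raise the genus without forcing the degree up proportionally; this is what defeats the naive per-bunch bound $\gon(G)\le\sum_i \lfloor m_i/2\rfloor +1$, which is valid but already exceeds $\lfloor (g+3)/2\rfloor$ once two or more bunches are even. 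Making the reuse bookkeeping precise---so that the degree never exceeds $\lfloor (g+3)/2\rfloor$ while positive rank is preserved as each new leaf is attached---is the step I expect to be the main obstacle.

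With the construction in hand I would carry a per-bunch penalty $p(m_i)$ that vanishes exactly for $m_i\le 4$ and is positive for $m_i\ge 5$, proving the sharper bound $\gon(G)\le \lfloor (g+3)/2\rfloor-\sum_i p(m_i)$. The intuition is that a bunch of multiplicity $m$ adds $m-1$ to the genus (hence about $(m-1)/2$ to the target) while its contribution to the gonality saturates---crossing it is never the binding constraint beyond multiplicity $4$, and the one-chip-per-vertex strategy caps its cost independently of $m$---so large bunches generate slack. Equality in the Brill--Noether bound then forces every penalty to vanish, which is exactly the necessary condition that all bunches have size at most $4$. That the threshold is $4$ and not $3$ is witnessed by the banana path with multiplicities $(4,2)$, which has genus $4$ and gonality $3=\lfloor 7/2\rfloor$, so a size-$4$ bunch genuinely occurs in an equality case.

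Finally, for the finiteness statement I would first explain why the bridgeless hypothesis is needed: a bunch of multiplicity $1$ (a bridge, in particular a pendant tree edge) changes neither $g$ nor $\gon$, since any positive-rank divisor extends across a bridge by a single firing, so attaching trees to any equality example yields infinitely many non-bridgeless equality examples. For bridgeless $G$ every $m_i\ge 2$, so $g\ge |V|-1$ grows with the size of the tree, whereas the reuse in the construction shows that each additional bunch eventually contributes strictly positive slack to $\lfloor (g+3)/2\rfloor-\gon(G)$ (for instance a chain of doubled edges keeps gonality $2$ while the bound grows linearly, and a long all-triple path keeps gonality $3$). Hence the slack tends to infinity with the number of bunches, and only finitely many bridgeless banana trees can have zero slack. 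The remaining technical point---quantifying this slack growth sharply enough to extract an explicit size bound on the extremal family---is where the reuse analysis must be pushed hardest.
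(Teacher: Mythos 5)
Your outline shares the paper's skeleton---strip leaves one at a time, account for genus bunch by bunch, and observe that bridges are what break finiteness---but the step you flag as ``the main obstacle'' is precisely where the entire proof lives, and your proposal does not contain the idea that resolves it. The dangerous case is a leaf attached by a bunch of size $a=2$: it adds only $1$ to the genus, so $\lfloor(g+3)/2\rfloor$ increases by at most $1$ and, for one parity of $g$, by $0$, while the generic bound $\gon(G)\le 1+\gon(G')$ would overshoot. Your ``reuse'' intuition is the right instinct, but it must be turned into a dichotomy: either some minimum-degree positive-rank divisor on $G'$ already places $2$ chips on the attachment vertex $w$ (so $\gon(G)\le\gon(G')$ and the genus increase is free), or no such divisor exists---and in that second case one has to extract structure, namely that a $w$-reduced witness has exactly one chip on $w$, that every bunch at $w$ has at least $3$ edges, and that $G'-w$ decomposes into components $B_u$ with $\gon(G')=1+\sum_u\gon(B_u)$, after which the genus of $G$ exceeds $1+\sum_u(g(B_u)+3)$ and the bound closes. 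Without that case analysis the invariant you want to maintain (partial degree at most $\lfloor(g'+3)/2\rfloor$) simply cannot be verified at a doubled-edge attachment. Separately, the additive penalty bound $\gon(G)\le\lfloor(g+3)/2\rfloor-\sum_i p(m_i)$ you propose is a strictly stronger statement than the theorem; the paper never proves anything of that shape, only tracks which of its inequalities are strict when some bunch exceeds size $4$, and you give no argument that a \emph{per-bunch additive} penalty is even consistent (your own examples show the slack from several size-$5$ bunches is not a fixed multiple of their count).

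The finiteness claim is also not established by your argument. ``The slack tends to infinity with the number of bunches'' rests on the unproven penalty bound, and in any case the penalty vanishes on bunches of size $2$, $3$, $4$---exactly the bunches that survive the equality analysis---so it cannot be what forces finiteness. The paper's route is much shorter and does not need any slack growth: equality plus bridgelessness forces every bunch to have size in $\{2,3,4\}$, so Lemma~\ref{lemma:easy_upper_bound_gon} gives $\gon(G)\le\lcm(2,3,4)=12$, equality then forces $g\le 22$, and there are only finitely many ripe banana trees of genus at most $22$. I would recommend abandoning the quantitative slack bookkeeping and instead (i) proving the $a\ge 3$ and $a=2$ inductive cases separately, with the dichotomy above for $a=2$, and (ii) deducing finiteness from the lcm bound rather than from asymptotics.
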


Our paper is organized as follows. In Section \ref{section:background}, we provide general background and definitions of chip-firing games and banana paths. In Section \ref{section:algorithms}, we present an algorithm for computing the gonality of banana trees, as well as results on the related invariants of scramble number and screewidth of banana trees. We also show specific results regarding monotone and unimodal graphs, we prove Theorem \ref{theorem:polytime_gonality}. In Section \ref{section:monotoneandunimodal}, we find lower bounds on certain banana path gonalities and prove Theorem \ref{theorem:peeling_edges}. In Section \ref{section:brill} we consider banana trees from the perspective of Brill-Noether theory.  Lastly, in Section \ref{section:computationalresults}, we provide some computational results.

\medskip

\noindent \textbf{Acknowledgements.} The authors were supported by Williams College and by the NSF via grants DMS-2241623 and DMS-1947438.  The third author was supported by Brown University.

\section{Background and definitions} \label{section:background}
A graph $G$ is a pair \(G=(V,E)\), where $V$ is a finite set of vertices and $E$ is a finite multiset of edges, defined as unordered pairs of vertices in $V$. If $E$ is a set (that is, if no edge is repeated), we call $G$ a \emph{simple graph}. Throughout this paper, we assume our graphs are connected, undirected, and loopless. The \emph{underlying simple graph of $G$} is the graph obtained by deleting all but one edge between any two adjacent vertices in $G$.  Given a vertex \(v\in V(G)\), the \emph{valence of $v$}, denoted \(\val(v)\), is the number of edges incident to \(v\).  Given a set of vertices $S\subset V(G)$, the \emph{subgraph induced by $S$}, denoted $G[S]$, is the graph with vertex set $S$ and edge set consisting of those edges in $E(G)$ with both endpoints in $S$.

A \emph{divisor} \(D:V(G)\rightarrow \mathbb{Z}\) on a graph \(G\) is an assignment of integers to the vertices of \(G\). We intuitively think of this assignment as a placement of poker chips on the vertices of \(G\), with $D(v)$ chips placed on $v$, with negative integers representing debt. The set \(\textrm{Div}(G)\) of all divisors on \(G\) forms a group under vertex-wise addition; in particular, it is the free Abelian group on \(V(G)\).  We write a divisor \(D\) as
\[D=\sum_{v\in V(G)}D(v)\cdot v.\]
Sometimes it is useful to refer to the number of chips \(D\) places on \(v\) as \(D(v)\).
The \emph{degree} of a divisor is the sum of the integers:
\[\deg(D)=\sum_{v\in V(G)}D(v).\]
We say a divisor \(D\) is \emph{effective} if \(D(v)\geq 0\) for all \(v\), i.e. if no vertex is in debt.

We can transform one divisor into another through \emph{chip-firing moves}.  Given a divisor \(D\) and a vertex \(v\in V(G)\), \emph{chip-firing at $v$} transforms \(D\) into \(D'\) by subtracting \(\val(v)\) chips from \(v\) and adding chips to the neighbors \(w\) of \(v\), one chip for each edge connecting \(v\) to \(w\).  Formulaically,
\[D'=(D(v)-\val(v))\cdot v+\sum_{w\in V(G)-\{v\}}(D(w)+|E(v,w)|)\cdot w.\]
We say that two divisors \(D\) and \(D'\) are \emph{linearly equivalent}, denoted \(D\sim D'\), if one can obtain \(D'\) from \(D\) through a sequence of chip-firing moves.  For example, in Figure \ref{figure:first_banana_path_example} we have three equivalent divisors on a graph \(G\); the second is obtained from the first by firing the middle vertex $v_1$, and the third from the second by firing the leftmost vertex $v_0$.  The first and third divisors are effective; the second is not.

\begin{figure}[hbt]
    \centering
    \includegraphics[scale=1.3]{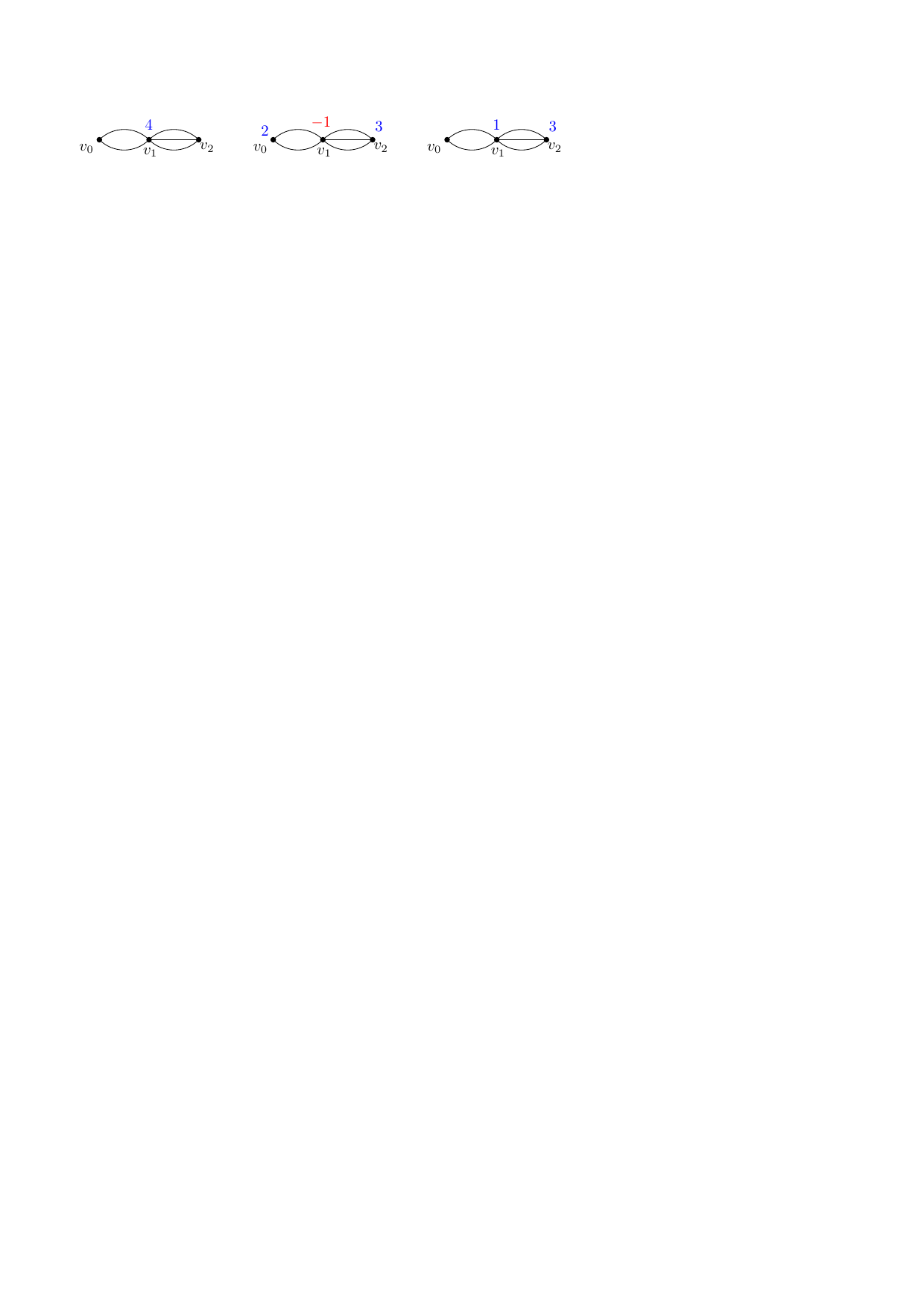}
    \caption{The resulting divisors from from firing the $v_1$ followed by firing $v_0$.}
    \label{figure:first_banana_path_example}
\end{figure}

Since the order in which vertices are fired does not matter, it is often useful to think of firing multiple vertices simultaneously.  Given a subset \(S\subset V(G)\), the \emph{subset-firing move at \(S\)} fires every vertex of \(S\).  The net effect of such a move is that one chip moves along every edge connecting \(S\) to \(S^C\), and no other chip movement occurs (since any two adjacent vertices in \(S\) will send chips to one another and cancel out).  For instance, firing the set $S=\{v_0,v_1\}$ in Figure \ref{figure:first_banana_path_example} yields the third divisor.  Note that although the individual chip-firing moves introduced intermediate debt, this did not occur for this particular subset-firing move.  If there is no debt in a divisor before or after a subset-firing move, we refer to that subset-firing move as \emph{legal}.  It turns out that when considering equivalence of divisors, it suffices to consider legal subset-firing moves:

\begin{lemma}\label{lemma:level-set}[Corollary 3.11 in \cite{db}]
Let $D$ and $D'$ be equivalent effective divisors.  There exists a finite sequence of subset-firing moves that transforms $D$ into $D'$ without ever going into debt.
\end{lemma}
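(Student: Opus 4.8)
The plan is to reduce the statement to the combinatorics of a single firing script and then to fire its superlevel sets in a carefully chosen order. Since $D\sim D'$, there is an integer-valued \emph{firing script} $f\colon V(G)\to\Z$ recording that each vertex $v$ is fired a net total of $f(v)$ times, so that
\[
D'(v)=D(v)-\sum_{w\in V(G)}|E(v,w)|\,(f(v)-f(w))\qquad\text{for all }v\in V(G).
\]
Firing every vertex the same number of times has no net effect, so I may add a constant to $f$ and assume $\min_v f(v)=0$; set $M=\max_v f(v)$. For each $k\in\{1,\dots,M\}$ let $C_k=\{v:f(v)\geq k\}$ be the $k$-th superlevel set, so that $C_1\supseteq C_2\supseteq\cdots\supseteq C_M$, and each $C_k$ is a proper nonempty subset of $V(G)$ (nonempty because $M\geq 1$, proper because some vertex has $f=0$). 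Since each $v$ lies in exactly $C_1,\dots,C_{f(v)}$, performing the subset-firing move at each $C_k$ exactly once realizes the full script $f$, and hence transforms $D$ into $D'$. It therefore only remains to choose an order in which all intermediate divisors stay effective.

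My claim is that firing the sets top-down, namely $C_M,C_{M-1},\dots,C_1$, never creates debt, which I would prove by induction on $M$. The base case $M=0$ is trivial, as then $D=D'$. For the inductive step the crucial point, and the main obstacle, is to show that the subset-firing move at the topmost set $C_M$ is legal from the effective divisor $D$. A vertex $v\notin C_M$ only gains chips under this move, so effectivity is automatic there; the content is at $v\in C_M$, where firing $C_M$ removes exactly $\sum_{w\notin C_M}|E(v,w)|$ chips from $v$. Here I would exploit that $f(v)=M$ is maximal together with effectivity of the \emph{target} $D'$: from the identity above,
\[
D(v)=D'(v)+\sum_{w\in V(G)}|E(v,w)|\,(M-f(w))\geq \sum_{w\notin C_M}|E(v,w)|\,(M-f(w))\geq \sum_{w\notin C_M}|E(v,w)|,
\]
where the final inequality uses $M-f(w)\geq 1$ for every $w\notin C_M$. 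Thus $v$ retains a nonnegative number of chips, so firing $C_M$ is a legal subset-firing move.

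To close the induction, I would observe that after firing $C_M$ the resulting divisor $D_1$ is effective, and that $D'$ is obtained from $D_1$ by the reduced script $f_1=f-\mathbf{1}_{C_M}$ (subtracting $1$ from each vertex of $C_M$), which again satisfies $\min f_1=0$ and has $\max f_1=M-1$; a short computation shows its superlevel sets are exactly $C_1,\dots,C_{M-1}$. By the inductive hypothesis, firing $C_{M-1},\dots,C_1$ in order carries $D_1$ to $D'$ without ever going into debt, completing the argument and exhibiting the desired legal sequence. The one piece of bookkeeping to handle with care is the normalization of $f$ and the verification that peeling off the top level set leaves the remaining level sets intact; the genuinely substantive step, however, is the displayed inequality, which is precisely where the hypothesis that $D'$ (and not merely $D$) is effective enters the proof.
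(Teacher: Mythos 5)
Your proof is correct. Note that the paper does not actually prove this lemma: it is imported as a black box, cited as Corollary 3.11 of \cite{db}. Your argument --- decomposing the (normalized) firing script into its nested superlevel sets $C_M\subseteq\cdots\subseteq C_1$, firing them top-down, and using effectivity of the \emph{target} $D'$ together with $M-f(w)\geq 1$ for $w\notin C_M$ to show the topmost firing is legal --- is precisely the standard proof found in that reference and elsewhere in the chip-firing literature, so you have essentially reconstructed the cited proof rather than found a new route. All the steps check out: the identity $D(v)=D'(v)+\sum_{w}|E(v,w)|(M-f(w))$ for $v$ with $f(v)=M$ gives exactly the bound needed for legality, and the bookkeeping that $f-\mathbf{1}_{C_M}$ has superlevel sets $C_1,\dots,C_{M-1}$ makes the induction close cleanly.
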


For any divisor $D$ and any vertex $q$, there exists a unique $q$-reduced divisor $D_q$ equivalent to $D$.  This is a divisor such that
\begin{itemize}
    \item[(i)] $D(v)\geq 0$ for all $v\neq q$, and
    \item[(ii)] There does not exist a legal subset-firing move $S\subset V(G)-\{q\}$.
\end{itemize}
For instance, for the divisors illustrated in Figure \ref{figure:first_banana_path_example}, the first two divisors are not $v_2$-reduced; the first has a legal firing set $\{v_0,v_1\}$, and the second has a vertex besides $v_2$ in debt.  The third however is $v_2$-reduced.  We refer the reader to \cite[Ch. 3]{sandpiles} for more details.

We now define the \emph{rank} of a divisor, denoted \(r(D)\).  If \(D\) is not equivalent to any effective divisor, we set \(r(D)=-1\).  Otherwise, \(r(D)\) is the largest nonnegative integer \(r\) such that for every effective divisor \(E\) with \(\deg(E)=r\), we have that \(D-E\) is equivalent to an effective divisor.  Intuitively, \(r(D)\) is the maximum amount of added debt that \(D\) can eliminate, regardless of where that debt is added.  The \emph{(divisorial) gonality} of a graph \(G\), written \(\gon(G)\), is the minimum degree of a divisor of positive rank on \(G\).  More generally, for \(r\geq 1\) the \(r^{th}\) \emph{divisorial gonality} of a graph \(G\), written \(\gon_r(G)\), is the minimum degree of a divisor of rank at least \(r\) on \(G\).

Given an effective divisor \(D\) of positive rank on a graph \(G\), we know that every divisor of the form \(D-v\) can have debt eliminated; it follows that for every vertex \(v\), \(D\) is equivalent to some effective divisor that places at least one chip on \(v\).  Any divisor with that property will have positive rank, so we can alternatively define \(\gon(G)\) as the minimum degree of a divisor \(D\) such that for any vertex \(v\) we can use chip-firing moves to transform \(D\) into an effective divisor with at least one chip on \(v\).

Several well-studied graph parameters are known to give a lower bound on a graph's gonality, including the tree-width of a graph \cite{debruyn2014treewidth}.  A more powerful lower bound comes from the scramble number of a graph.  A \emph{scramble} \(\mathcal{S}\) on a graph is a collection \(\{V_1,\ldots,V_k\}\) of nonempty vertex sets \(V_i\subset V(G)\) such that \(G[V_i]\) is connected for all \(i\); each \(V_i\) is called an \emph{egg}.  A set \(T\subset V(G)\) is called a \emph{hitting set} for \(\mathcal{S}\) if \(T\cap V_i\neq\emptyset\) for all \(i\); the \emph{hitting number} of \(\mathcal{S}\), written \(h(\mathcal{S})\), is minimum size of a hitting set for \(\mathcal{S}\).  A set \(T\subset E(G)\) is called an \emph{egg-cut} for \(\mathcal{S}\) if \(G-T=(V(G),E(G)-T)\) has at least two connected components completely containing an egg; the \emph{egg-cut number} of \(\mathcal{S}\), denoted \(e(\mathcal{S})\), is the minimum size of an egg-cut for \(\mathcal{S}\).  The \emph{order} of a scramble \(\mathcal{S}\) is defined to be the minimum of these two numbers:
\[\|\mathcal{S}\|=\min\{h(\mathcal{S}),e(\mathcal{S})\}.\]
Finally, the \emph{scramble number} of a graph \(G\), written \(\sn(G)\), the maximum order of any scramble on \(G\).  We have the following powerful result.
\begin{theorem}[Theorem 1.1 in \cite{new_lower_bound}]
    For any graph \(G\), we have \(\sn(G)\leq \gon(G)\).
\end{theorem}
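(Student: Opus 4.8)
The plan is to prove the equivalent statement that for every scramble $\mathcal{S}=\{V_1,\dots,V_k\}$ on $G$ and every effective divisor $D$ of positive rank, one has $\deg(D)\ge\|\mathcal{S}\|=\min\{h(\mathcal{S}),e(\mathcal{S})\}$. Taking $D$ to be a positive-rank divisor of minimum degree (so $\deg(D)=\gon(G)$) and $\mathcal{S}$ to be a scramble of maximum order then gives $\sn(G)=\|\mathcal{S}\|\le\gon(G)$, as desired. So I fix $\mathcal{S}$ and $D$ and aim to show $\deg(D)\ge h(\mathcal{S})$ or $\deg(D)\ge e(\mathcal{S})$; either inequality forces $\deg(D)\ge\min\{h(\mathcal{S}),e(\mathcal{S})\}$.

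The engine of the argument is a single cut-size estimate: if $S\subsetneq V(G)$ is a \emph{legal} subset-firing move on an effective divisor $D'$ (so $D'$ and its image are both effective), then the size $|\partial S|$ of the set $\partial S$ of edges between $S$ and $S^{C}$ satisfies $|\partial S|\le\deg(D')=\deg(D)$. Indeed, legality forces $D'(v)\ge|E(v,S^{C})|$ for every $v\in S$, since otherwise $v$ would go into debt; summing over $v\in S$ gives $|\partial S|=\sum_{v\in S}|E(v,S^{C})|\le\sum_{v\in S}D'(v)\le\deg(D)$. I will combine this with Lemma \ref{lemma:level-set}: any two effective divisors in the same class are joined by a sequence of subset-firing moves through effective divisors, so every firing set appearing in such a sequence is legal and hence has boundary of size at most $\deg(D)$.

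With this in hand I split into two cases. If the support of $D$ is a hitting set for $\mathcal{S}$, then since $D$ is effective each supported vertex carries at least one chip, so $\deg(D)\ge|\operatorname{supp}(D)|\ge h(\mathcal{S})$ and we are done. Otherwise some egg $E_0$ is disjoint from $\operatorname{supp}(D)$, and here I use positive rank: choosing $q\in E_0$, the alternative description of positive rank given above shows $D$ is equivalent to an effective divisor placing a chip on $q$, so the $q$-reduced representative $D_q\sim D$ satisfies $D_q(q)\ge1$. Thus reducing toward $q$ forces a chip into the initially chip-free egg $E_0$. Writing the reduction as a sequence of legal subset-firing moves, each firing a set avoiding $q$, let $S$ be the first such set whose firing deposits a chip in $E_0$. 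I claim $E_0\subseteq S^{C}$: just before this step the divisor vanishes on all of $E_0$, so any vertex of $E_0\cap S$ lying on an $E_0$-path to the unfired vertex $q$ would need an incident edge leaving $S$ to a chip-free neighbor, forcing it to carry a chip it does not have; hence $E_0\cap S=\emptyset$. So $S$ is a legal firing set with the intact egg $E_0$ on one side and $|\partial S|\le\deg(D)$.

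The main obstacle is the final step: upgrading $\partial S$ to a genuine \emph{egg-cut}, which requires a \emph{second} egg lying entirely on the $S$-side. The difficulty is that a level set of the reduction may slice through the other eggs, leaving no egg wholly inside $S$, and the ``first chip enters $E_0$'' device controls only the missed egg. I expect to resolve this by an extremal choice of separating set rather than the naive first one --- for instance, by running Dhar's burning from $q$ and tracking the first moment an egg becomes completely burnt while $E_0$ remains wholly unburnt, exploiting that each egg induces a connected subgraph and so burns ``all at once'' relative to the evolving cut --- so that the resulting firing set has a full egg on each side. Once such an $S$ is produced, removing $\partial S$ disconnects $G$ into a component containing $E_0$ and a component containing the other egg, making $\partial S$ an egg-cut of size $|\partial S|\le\deg(D)$ and yielding $\deg(D)\ge e(\mathcal{S})$. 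Verifying that this extremal cut genuinely isolates two full eggs, and that it can always be taken legal, is the crux of the proof.
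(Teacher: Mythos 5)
This statement is not proved in the paper at all: it is imported verbatim as Theorem 1.1 of \cite{new_lower_bound}, so the only fair comparison is with the argument given there, whose opening moves your proposal reproduces correctly. Specifically, three of your steps are sound and standard: the estimate that a legal subset-firing move $S$ on an effective divisor $D'\sim D$ satisfies $|\partial S|\le\deg(D)$; the hitting-set case $\deg(D)\ge|\operatorname{supp}(D)|\ge h(\mathcal{S})$; and the observation that the first firing set $S$ whose firing puts a chip into the chip-free egg $E_0$ must satisfy $E_0\cap S=\emptyset$ (connectivity of $E_0$ together with $q\notin S$ produces a chip-free vertex of $E_0$ in $S$ with an edge to $S^{C}$, violating legality). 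But the proof is incomplete exactly where you say it is, and the sketched repair does not work as stated. In a Dhar burning run from $q\in E_0$ on a divisor that is chip-free on $E_0$, the egg $E_0$ burns \emph{immediately} --- the fire starts inside it and a chip-free vertex burns as soon as one incident edge does --- so ``the first moment an egg becomes completely burnt while $E_0$ remains wholly unburnt'' never occurs; you have the orientation backwards, since the legal fired set is the \emph{unburnt} set and $E_0$ lies on the burnt side. Even with the orientation fixed, nothing forces the stage at which some egg is wholly on one side to coincide with a cut keeping $E_0$ whole on the other: every egg besides $E_0$ may straddle every intermediate cut, and straddling is consistent with everything you have established, so your argument as written only yields $\deg(D)\ge\min\{h(\mathcal{S}),|\partial S|\}$ for a cut that need not be an egg-cut.

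The concrete resource you have not used, and which is needed to close the gap, is that the hypothesis $\deg(D)<h(\mathcal{S})$ applies to \emph{every} effective divisor in the firing sequence, not just to $D$: each intermediate divisor $D_{j-1}$ also misses some egg $E'$, and a chip-free egg can never straddle a legal firing set (its boundary vertex inside the fired set would need a chip to pay an outgoing edge). So at the critical step the missed egg $E'$ of $D_{j-1}$ lies entirely in $S$ or entirely in $S^{C}$; in the first case $\partial S$ is your egg-cut, and the real crux is the remaining case $E'\subseteq S^{C}$, where $E'$ sits on the same side as $E_0$. Resolving that case requires further structure --- for instance, exploiting that the firing sets coming from the level-set decomposition behind Lemma \ref{lemma:level-set} can be taken nested, and running a symmetric first-entry argument from the reduced end of the sequence --- and your proposal contains no argument for it. Since you explicitly defer this step (``I expect to resolve this\dots''), the proposal should be assessed as a correct framework with the decisive step missing, not as a proof.
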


\begin{figure}[hbt]
    \centering
    \includegraphics[width=0.5\linewidth]{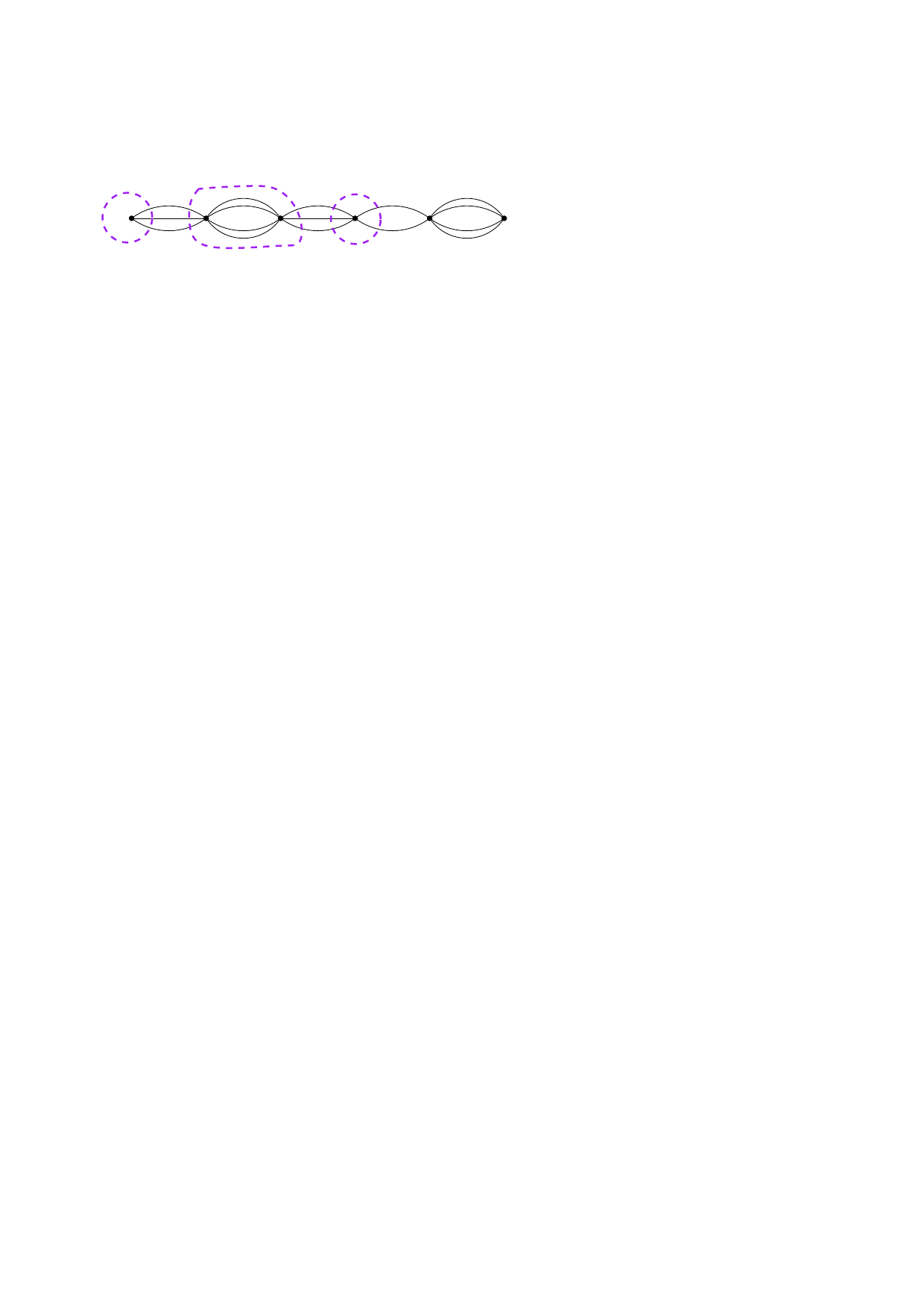}
    \caption{A scramble of order $3$ on a graph.}
\label{figure:scramble_banana_path_example}
\end{figure}
An example of a scramble $\mathcal{S}$ on a graph $G$ is illustrated in Figure \ref{figure:scramble_banana_path_example}.  There are three eggs, two consisting of one vertex each and one consisting of two vertices.  Since the eggs happen to be disjoint, the hitting number is simply the number of eggs: $h(\mathcal{S})=3$.  The egg-cut number is also $3$; the only way to disconnect the graph by deleting fewer edges is to delete the pair of edges to the right of the rightmost egg, but this does not form an egg cut.  Thus the scramble has order $3$, and we know $\sn(G)\geq 3$

A useful graph parameter when studying scramble number is the screewidth of a graph.  A \emph{tree-cut decomposition} of a graph \(G\) is a pair \(\mathcal{T}=(T,\mathcal{X})\), where \(T\) is a tree and \(\mathcal{X}:V(G)\rightarrow V(T)\) is any function of sets.  For clarity, we refer to the vertices and edges of \(T\) as \emph{nodes} and \emph{links}, respectively, reserving the terms \emph{vertices} and \emph{edges} for \(G\).  Given a tree-cut decomposition, we may draw a thickened copy of \(T\) with each vertex of \(G\) drawn inside of the node it maps to; and then edges connecting the vertices of \(G\) can be drawn along the unique path in \(T\) connecting their nodes.  We then assign a weight to each node and link in \(T\) as follows:  the weight of each node is equal to the number of vertices of \(G\) drawn inside of it, plus the number of edges of \(G\) passing through it without either endpoint inside of it.  The weight of each link is equal to the number of edges of \(G\) passing through it.  The \emph{width} of the tree-cut decomposition \(\mathcal{T}\), written \(w(\mathcal{T})\), is equal to the maximum of all these weights.  The \emph{screewidth} of \(G\), written \(\scw(G)\), is defined to be the minimum width of any tree-cut decomposition.  It turns out that screewidth serves as an upper bound on scramble number:
\begin{theorem}[Theorem 1.1 in \cite{screewidth-og}]
    For any graph \(G\), we have \(\sn(G)\leq \scw(G)\).
\end{theorem}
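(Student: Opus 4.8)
The plan is to prove the statement directly by pitting a single scramble against a single decomposition. Since $\sn(G)=\max_{\mathcal{S}}\|\mathcal{S}\|$ and $\scw(G)=\min_{\mathcal{T}}w(\mathcal{T})$, it suffices to fix an arbitrary scramble $\mathcal{S}=\{V_1,\dots,V_k\}$ and an arbitrary tree-cut decomposition $\mathcal{T}=(T,\mathcal{X})$ and show $\|\mathcal{S}\|\le w(\mathcal{T})$. For each egg $V_i$, let $T_i\subseteq T$ be the minimal subtree of $T$ containing every node that holds a vertex of $V_i$. Because $G[V_i]$ is connected, each $T_i$ is a genuine (connected) subtree, and I would picture $T_i$ as the union of the paths in $T$ traced out by the edges of $G[V_i]$. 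The whole argument then rests on a dichotomy for the family $\{T_1,\dots,T_k\}$ supplied by the Helly property for subtrees of a tree: either all the $T_i$ share a common node, or some pair $T_i,T_j$ is disjoint.

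First I would treat the disjoint case, which produces a small egg-cut. If $T_i\cap T_j=\emptyset$, then some link $\ell$ of $T$ lies on the path between them with $T_i$ entirely on one side and $T_j$ entirely on the other. Deleting from $G$ exactly the edges that pass through $\ell$ severs every edge joining vertices on opposite sides of $\ell$, so $G[V_i]$ and $G[V_j]$ land in distinct connected components; hence these edges form an egg-cut, and its cardinality is precisely the weight of $\ell$. This yields $e(\mathcal{S})\le w(\mathcal{T})$, and therefore $\|\mathcal{S}\|\le e(\mathcal{S})\le w(\mathcal{T})$.

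Next I would treat the common-node case, which produces a small hitting set. Suppose $t^{\ast}\in\bigcap_i T_i$. For each egg $V_i$, since $t^{\ast}\in T_i$, either some vertex of $V_i$ sits in the node $t^{\ast}$, or $t^{\ast}$ is internal to $T_i$ and the connectivity of $G[V_i]$ forces an edge of $G[V_i]$ to pass through $t^{\ast}$ with neither endpoint in $t^{\ast}$. I then build a candidate hitting set $T$ by taking every vertex of $G$ lying in node $t^{\ast}$ together with one endpoint of each edge of $G$ passing through $t^{\ast}$. Since any such through-edge drawn from $G[V_i]$ has both endpoints in $V_i$, the set $T$ meets every egg, and $|T|\le w(\mathcal{T})$ directly from the definition of a node's weight. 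Thus $h(\mathcal{S})\le w(\mathcal{T})$ and $\|\mathcal{S}\|\le h(\mathcal{S})\le w(\mathcal{T})$. Combining the two cases gives $w(\mathcal{T})\ge\|\mathcal{S}\|$ for every choice, and taking the minimum over $\mathcal{T}$ and the maximum over $\mathcal{S}$ yields $\sn(G)\le\scw(G)$.

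I expect the crux to be the two correspondence arguments rather than the Helly dichotomy itself: verifying that the edges through a separating link are exactly an egg-cut whose size equals the link weight, and verifying that each egg with $t^{\ast}\in T_i$ contributes either a vertex in $t^{\ast}$ or a through-edge at $t^{\ast}$, so that the weight of $t^{\ast}$ genuinely dominates the hitting set built above. Both steps require careful bookkeeping between the vertices and edges of $G$ and the nodes and links of $T$, and they must account for the possibility of overlapping eggs, which is harmless for hitting sets but should be checked explicitly. I would also confirm the degenerate case $k=1$ and cite (or briefly prove) the Helly property for subtrees of a tree.
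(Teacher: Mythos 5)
Your proof is correct, but note that there is nothing in this paper to compare it against: the statement is quoted as Theorem 1.1 of \cite{screewidth-og}, and the paper imports it as a known result without giving a proof. Judged on its own merits, your argument is a complete and valid proof, and it follows the classic bramble-versus-decomposition duality scheme: assign to each egg \(V_i\) the minimal subtree \(T_i\) of \(T\) spanning the nodes holding \(V_i\), then invoke the Helly property for subtrees of a tree to split into two cases --- a disjoint pair \(T_i\cap T_j=\emptyset\) yields a separating link whose through-edges form an egg-cut of size equal to the link weight, while a common node \(t^{\ast}\) yields a hitting set of size at most the weight of \(t^{\ast}\). Both correspondence arguments check out: for the egg-cut, edges of \(G[V_i]\) have both endpoints mapped into \(T_i\), so they avoid the link \(\ell\) and each egg survives intact on its own side; for the hitting set, if no vertex of \(V_i\) lies in \(t^{\ast}\) then (since leaves of the Steiner tree \(T_i\) are terminal nodes) \(t^{\ast}\) is internal to \(T_i\), the vertices of \(V_i\) occupy at least two branches of \(T-t^{\ast}\), and connectivity of \(G[V_i]\) forces an edge through \(t^{\ast}\) with neither endpoint inside it. One small wording fix: when building the hitting set you should take one endpoint of each edge passing through \(t^{\ast}\) \emph{with neither endpoint inside \(t^{\ast}\)} --- only those edges are counted in the node weight, so this restriction is needed for the bound \(|T|\leq w(\mathcal{T})\) to follow ``directly from the definition''; your later summary makes clear this is what you intend, and the eggs needing a through-edge are exactly of this kind, so nothing is lost. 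Also, a minor imprecision: the minimal subtree \(T_i\) is connected automatically (it is a Steiner tree in \(T\)); connectivity of \(G[V_i]\) is not needed for that, only for the two case analyses. This is, in spirit, the same strategy used in the cited source, so your proof can stand as a faithful self-contained substitute for the external citation.
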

For example, we have our same graph $G$ along with a tree-cut decomposition illustrated in Figure  \ref{figure:tcd_banana_path_example}.  The tree-cut decomposition has three nodes (of weights $1$, $2$, and $3$) and two links (both of weight $3$), meaning the tree-cut decomposition has width $3$, so the graph has screewidth at most $3$.  Combined with our lower bound of $3$ on $\sn(G)$, we know that $3\leq \sn(G)\leq \scw(G)\leq 3$, so $\sn(G)=\scw(G)=3$.  

\begin{figure}[hbt]
    \centering
    \includegraphics{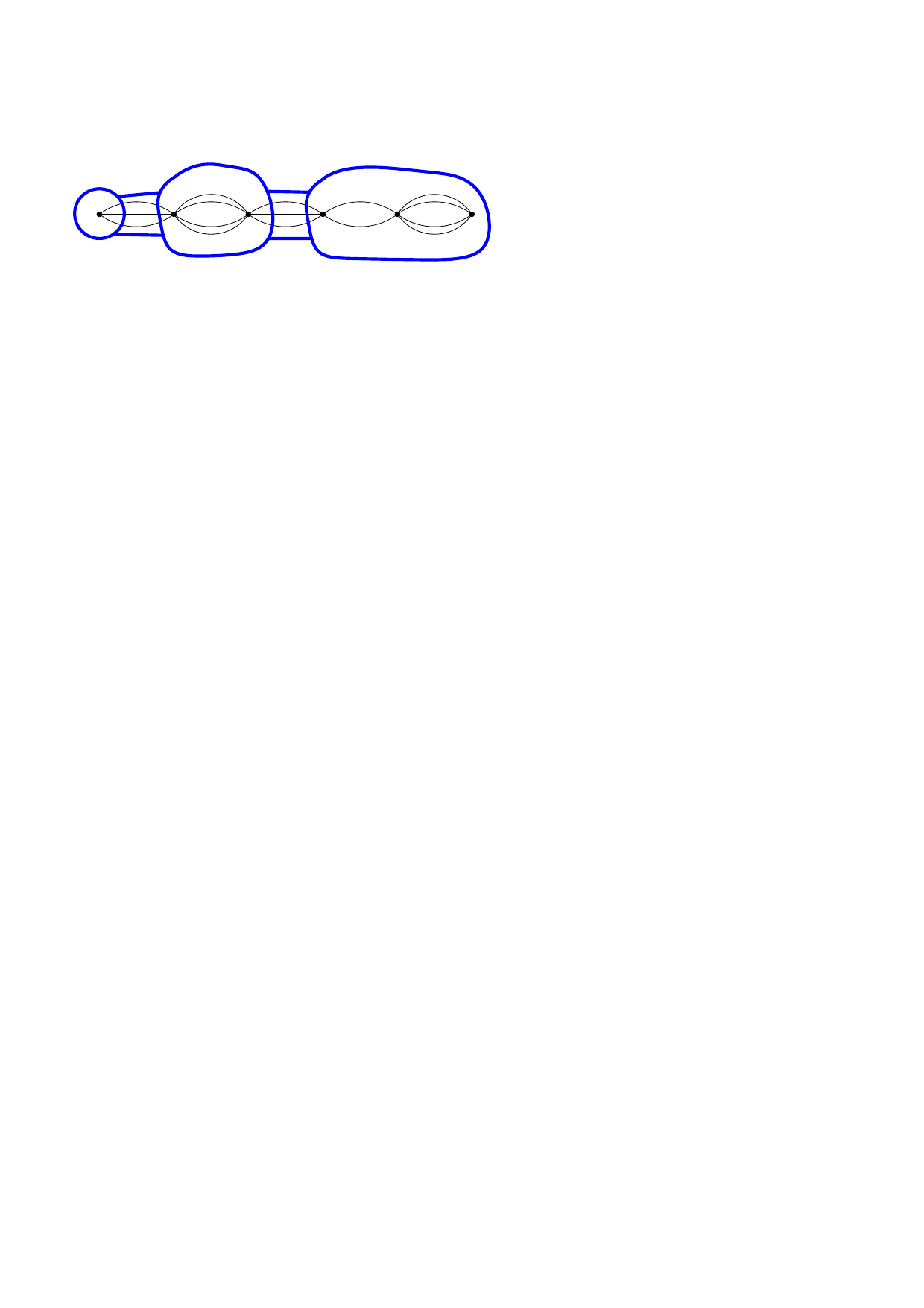}
    \caption{A graph with screewidth at most $3$, as demonstrated by a tree-cut decomposition.}
    \label{figure:tcd_banana_path_example}
\end{figure}

We now turn to the graphs of focus for this paper.  Recall that a banana tree is a multigraph whose underlying simple graph is a tree.  We call the edges connecting any pair of vertices an \emph{edge bunch}.  The banana tree in Figure \ref{figure:banana_tree_example} has $7$ edge bunches, one of size $2$, one of size $4$, and the other five of size $3$.

Given a divisor \(D\) on a banana tree $G$ with adjacent vertices $v$ and $w$, an \emph{adjacency move from $v$ to $w$} is a subset-firing move of the form $S=C_v$, where $C_v$ is the component of the disconnected graph obtained by deleting all edges from $v$ to $w$.  In particular, such an adjacency move sends $|E(v,w)|$ chips from $v$ to $w$.    An adjacency move is called \emph{legal} if it is a legal subset-firing move.

\begin{lemma}
    Let \(D\) and \(D'\) effective divisors on a banana tree \(G\).  Then \(D\sim D'\) if and only if \(D\) can be transformed into \(D'\) by a sequence of legal adjacency moves.
\end{lemma}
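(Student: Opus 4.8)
The plan is to prove the two implications separately, with the reverse direction being essentially immediate and the forward direction carrying all of the content. For the reverse direction, I would simply observe that by definition an adjacency move from $v$ to $w$ is the subset-firing move at $C_v$, and every subset-firing move is a composition of ordinary chip-firing moves; hence a sequence of legal adjacency moves only ever replaces $D$ by a linearly equivalent divisor, so $D\sim D'$. The work lies in the forward direction. Here I would first invoke Lemma \ref{lemma:level-set} to obtain a finite sequence of subset-firing moves taking $D$ to $D'$ during which no divisor ever goes into debt; in particular each move in this sequence is a \emph{legal} subset-firing move between effective divisors. It therefore suffices to show that a single legal subset-firing move at a set $S$ can be replaced by a sequence of legal adjacency moves with the same net effect, and then splice these replacements into the level-set sequence.

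Before decomposing, I would pin down the net effect of an adjacency move using the tree structure. Since the underlying simple graph of $G$ is a tree, deleting the edge bunch $E(v,w)$ separates $G$ into exactly two connected pieces $C_v$ and $C_w$, and the only edges joining them are those of $E(v,w)$. Thus firing $C_v$ moves exactly one chip across each edge of $E(v,w)$, all leaving the unique boundary vertex $v$ and arriving at the unique boundary vertex $w$: the move changes $D$ only by $D(v)\mapsto D(v)-|E(v,w)|$ and $D(w)\mapsto D(w)+|E(v,w)|$, leaving every other vertex fixed. Consequently, starting from an effective divisor, this adjacency move is legal if and only if $D(v)\geq |E(v,w)|$, since $v$ is the only vertex that can lose chips.

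Now I would decompose the legal subset-firing move at $S$. The boundary edges between $S$ and $S^C$ form, in the tree, a collection of edge bunches $E(v_i,w_i)$ with $v_i\in S$ and $w_i\in S^C$, and firing $S$ sends one chip across each such edge, decreasing each $u\in S$ by the number of edges from $u$ to $S^C$ and increasing each $u'\in S^C$ by the number of edges from $u'$ to $S$. I claim that performing the adjacency move $v_i\to w_i$ once for every boundary bunch, in an arbitrary order, reproduces exactly this net change: summing the local effects computed above, each $u\in S$ loses $\sum_{w\in S^C}|E(u,w)|$ chips and each $u'\in S^C$ gains $\sum_{v\in S}|E(v,u')|$ chips, matching the subset-firing move term by term. (If $S$ has no boundary bunches the move does nothing and the empty sequence suffices.)

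The main obstacle, and the only genuinely nontrivial point, is to guarantee that these adjacency moves can all be carried out without ever incurring debt, so that each is legal. I would handle this with a monotonicity observation: in my list of moves the source of every adjacency move lies in $S$ and the sink lies in $S^C$, so as the moves are performed in any order, each vertex of $S$ only ever loses chips and each vertex of $S^C$ only ever gains chips. Hence the chip count at every vertex is monotonic over the course of the decomposition, and its extreme value occurs either at the start (the effective divisor $D$) or at the end (the divisor produced by firing $S$, which is effective because the subset-firing move was legal). Every intermediate divisor is therefore effective; in particular, since the count at each $v_i$ only decreases and is non-negative at the end, at the instant we perform $v_i\to w_i$ the vertex $v_i$ still carries at least $|E(v_i,w_i)|$ chips, so the move is legal by the criterion of the second paragraph. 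Splicing these decompositions into the sequence from Lemma \ref{lemma:level-set} then yields a sequence of legal adjacency moves from $D$ to $D'$, completing the forward direction.
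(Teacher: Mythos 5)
Your proposal is correct and follows essentially the same route as the paper: reduce to a single legal subset-firing move via Lemma \ref{lemma:level-set}, decompose it into one adjacency move per boundary edge bunch, and verify legality by noting that vertices of $S$ only lose chips while the initial and final divisors are effective. Your treatment of the intermediate legality is somewhat more explicit than the paper's one-line justification, but the underlying argument is identical.
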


\begin{proof}
    The reverse implication is immediate since adjacency moves are composed of chip-firing moves.
    
    Now assume \(D\sim D'\). By Lemma \ref{lemma:level-set}, to show that $D$ can be transformed into $D'$ with a collection of legal adjacency moves, it suffices to show that any one legal subset firing move $S$ can be replicated with a collection of legal adjacency moves.  Consider the set $T$ of all ordered pairs of vertices $(v,w)$ such that $v$ and $w$ are adjacent, $v\in S$, and $v\in S^C$:
    \[T=\{(v,w)\,|\,vw\in E(G), v\in S, v\in S^C\}.\]
    If $D\stackrel{S}{\rightarrow}D''$ is a legal set-firing move from $D$ to $D''$, then 
    \[D''=D-\sum_{(v,w)\in T} |E(v,w)|\cdot v+\sum_{(v,w)\in T}|E(v,w)|\cdot w.\]
This can be rewritten as
\[D''=D+\sum_{(v,w)\in T}(|E(v,w)|\cdot w-|E(v,w)|\cdot v).\]
The change of $|E(v,w)|\cdot w-|E(v,w)|\cdot v$ is achieved by an adjacency move from $v$ to $w$.  Note that this adjacency move must be legal, since $D''\geq 0$ and $v$ does not receive any net chips from the set-firing move.  Thus we may transform $D$ to $D''$ using legal adjacency moves, namely those from $v$ to $w$ ranging over all $(v,w)\in T$.  This completes the proof.
\end{proof}

\begin{remark} In light of this lemma, we could equivalently define chip-firing games on banana trees by only ever allowing chips to move from one vertex $v$ to an adjacent vertex $w$, always in increments of $E(v,w)$.  Such a game could be defined on any graph, but the banana trees are precisely the graphs for which this game would be equivalent.  For if $G$ is a graph whose underlying simple graph has a cycle $C$, say with $v$ and $w$ adjacent $C$ with $a$ edges between them in $G$, then the divisor $a\cdot v$ is not equivalent to the divisor $a\cdot w$.  This can be seen, for instance, using Dhar's burning algorithm \cite{dhar} (see also \cite[Chapter 3]{sandpiles}).
\end{remark}

There is an easy-to-compute upper bound on the gonality of any banana tree in terms of the size of its edge bunches.

\begin{lemma}\label{lemma:easy_upper_bound_gon} If $G$ is a banana tree with $\{a_1,\ldots,a_n\}$ the multiset of the size of its edge bunches, then
\[\gon(G)\leq \lcm(a_1,\ldots,a_n).\]
\end{lemma}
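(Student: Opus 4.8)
The plan is to exhibit an explicit effective divisor of degree $\lcm(a_1,\ldots,a_n)$ and positive rank. Write $L=\lcm(a_1,\ldots,a_n)$, fix any vertex $q\in V(G)$, and consider the divisor $D=L\cdot q$, which has degree $L$. To show $D$ has positive rank I will use the alternative characterization of gonality recorded above: it suffices to show that for every vertex $v$, the divisor $D$ is equivalent to an effective divisor placing at least one chip on $v$. Since the underlying simple graph is a tree, there is a unique path $q=u_0,u_1,\ldots,u_k=v$, and I denote by $b_i=|E(u_{i-1},u_i)|$ the size of the $i$th edge bunch along this path; each $b_i$ is one of the $a_j$ and hence divides $L$.

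The key step is to push the entire pile of $L$ chips from $q$ along this path to $v$, one bunch at a time. I will argue by induction that after processing the first $i$ bunches all $L$ chips sit on $u_i$, with every intermediate divisor effective. For the inductive step, suppose all $L$ chips lie on $u_{i-1}$. Deleting the edge bunch between $u_{i-1}$ and $u_i$ splits $G$ into two components; let $C$ be the one containing $u_{i-1}$. Firing the set $C$ is an adjacency move that sends exactly $b_i$ chips from $u_{i-1}$ to $u_i$ and leaves every other vertex unchanged, since all other edges incident to $C$ are internal and their chip movements cancel. Because $b_i\mid L$, repeating this adjacency move $L/b_i$ times transfers all $L$ chips onto $u_i$; after the $j$th repetition the vertex $u_{i-1}$ carries $L-jb_i\ge 0$ chips, the vertex $u_i$ carries $jb_i\ge 0$ chips, and every other vertex carries $0$, so no divisor along the way goes into debt and every move is legal. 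This establishes the inductive step, and after $k$ bunches all $L$ chips rest on $v$.

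Consequently $D=L\cdot q$ is equivalent, through a sequence of effective divisors, to a divisor placing all of its chips on $v$; since this holds for every $v$, the divisor $D$ has positive rank. Therefore $\gon(G)\le\deg(D)=L=\lcm(a_1,\ldots,a_n)$, as claimed.

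The only point requiring care is the legality bookkeeping in the inductive step, and the crucial input there is precisely the divisibility $b_i\mid L$, which guarantees that a full pile can cross a bunch without leaving a negative remainder. I expect no genuine obstacle beyond this: the tree structure is what makes each edge-bunch deletion disconnect $G$ into exactly two pieces, so that the adjacency move is well defined and affects only a single bunch, and the uniqueness of paths in a tree is what lets me reduce to pushing chips along one path at a time.
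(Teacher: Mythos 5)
Your proposal is correct and follows essentially the same approach as the paper: place $L=\lcm(a_1,\ldots,a_n)$ chips on one vertex and move the full pile to any target vertex via repeated legal adjacency moves, using the divisibility $b_i\mid L$ to cross each bunch without remainder. Your version simply makes the legality bookkeeping and the path induction explicit where the paper leaves them implicit.
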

\begin{proof}
    Let $L= \lcm(a_1,\ldots,a_n)$, and consider the divisor $L\cdot v$ for any vertex $v$.  Note that we may move $L$ chips from $v$ to any neighbor $w$; in particular, if $|E(v,w)|=a_i$, we can perform $L/a_i$ legal adjacency moves from $v$ to $w$.  Repeating this argument allows us to move $L$ chips to any vertex in the graph, so $r(L\cdot v)>0$, and thus $\gon(G)\leq L$.
\end{proof}

For instance, the banana tree in Figure \ref{figure:banana_tree_example} has gonality at most $\lcm(2,3,4)=12$ by this result, since we can freely move $12$ chips from any vertex to any adjacent vertex.  Of course, this is not a particularly good upper bound; for instance, we would do better by simply placing a chip on every vertex.  The question of when the bound from Lemma \ref{lemma:easy_upper_bound_gon} is optimal will be explored in Section \ref{section:monotoneandunimodal}.

We will often focus on the case where the underlying tree is a path.  For \(n\geq 0\), the \emph{path graph on \(n+1\) vertices} is the simple graph with vertex set \(\{v_0,v_1,\ldots,v_n\}\), with \(v_i\) and \(v_j\) adjacent if and only if \(|i-j|=1\). A \textit{banana path} is a graph whose underlying simple graph is a path. We denote such banana paths with \(B_A\) where \(A=(a_1,\ldots,a_n)\) is a finite sequence of positive integers that represent the number of edges \(a_i\) in the edge bunch between the vertices \(v_{i-1}\) and \(v_i\). The banana path \(B_{(5, 4, 2, 3, 3, 2)}\) is illustrated in Figure \ref{fig:eg-banana-6}.

\begin{figure}[hbt]
    \centering
    \includegraphics{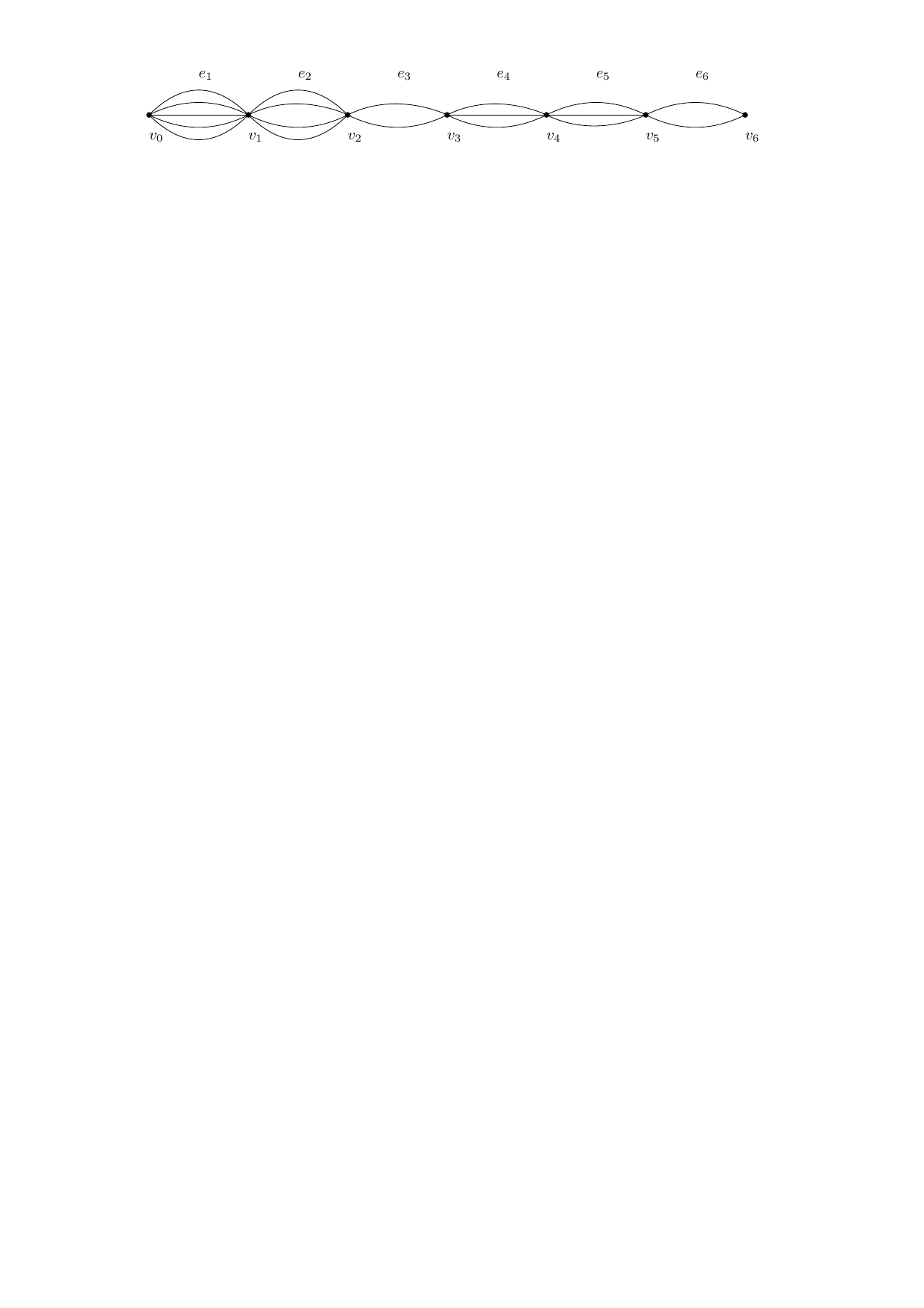}
    \caption{The banana path \(B_{(5, 4, 2, 3, 3, 2)}\), with  $7$ vertices and $6$ edge bunches}
    \label{fig:eg-banana-6}
\end{figure}

We remark that when chip-firing on banana trees, chips can be moved freely from a vertex $v$ to a vertex $w$ if there is exactly one edge $e$ between them.  This is accomplished by firing the set $S$ of vertices in the same component of $v$ in the disconnected graph $G-e$.  Letting $G'$ denote the graph obtained from $G$ by contracting each solitary edge $e$, the graphs $G$ and $G'$ have essentially identical divisorial properties; in particular, $\gon(G)=\gon(G')$  At times it is therefore reasonable to restrict to banana trees where all edge bunches have at least two edges; we refer to such a banana tree as a \emph{ripe} banana tree.

It is worth noting that the assumption of ripeness can lead to different behavior for scramble number or screewidth. For example, \(B_{(100,100)}\) has scramble number at least $3$ (obtained by a scramble where each vertex forms an egg), but  $B_{(100,1,100)}$ has scramble number at most $2$ (witnessed by a tree-cut decomposition with \(v_0\) and \(v_1\) in one bag and \(v_2\) and \(v_3\) in another).

At times it will be useful to study banana paths with a repeating pattern of edge bunch numbers.  Given a sequence \(A=(a'_1,\ldots,a'_s)\) and a nonnegative integer \(n\), we let \(B_{A;n}\) denote the banana path on \(n+1\) vertices with \(a_i=a'_{j}\), where \(j\in\{1,\ldots,s\}\) is congruent to \(i\) modulo \(s\).  In other words, \(B_{A;n}\) has an edge sequence that repeats \(a'_1,\ldots,a'_s\) until it has length \(n\), possibly stopping partway through the sequence \(A\).  We refer to \(B_{A;n}\) as an \emph{\(A\)-monoculture banana path}. The monoculture $B_{(5,3);6}$ is illustrated in  Figure \ref{fig:monoculture-3-5}.

\begin{figure}[hbt]
    \centering
    \includegraphics{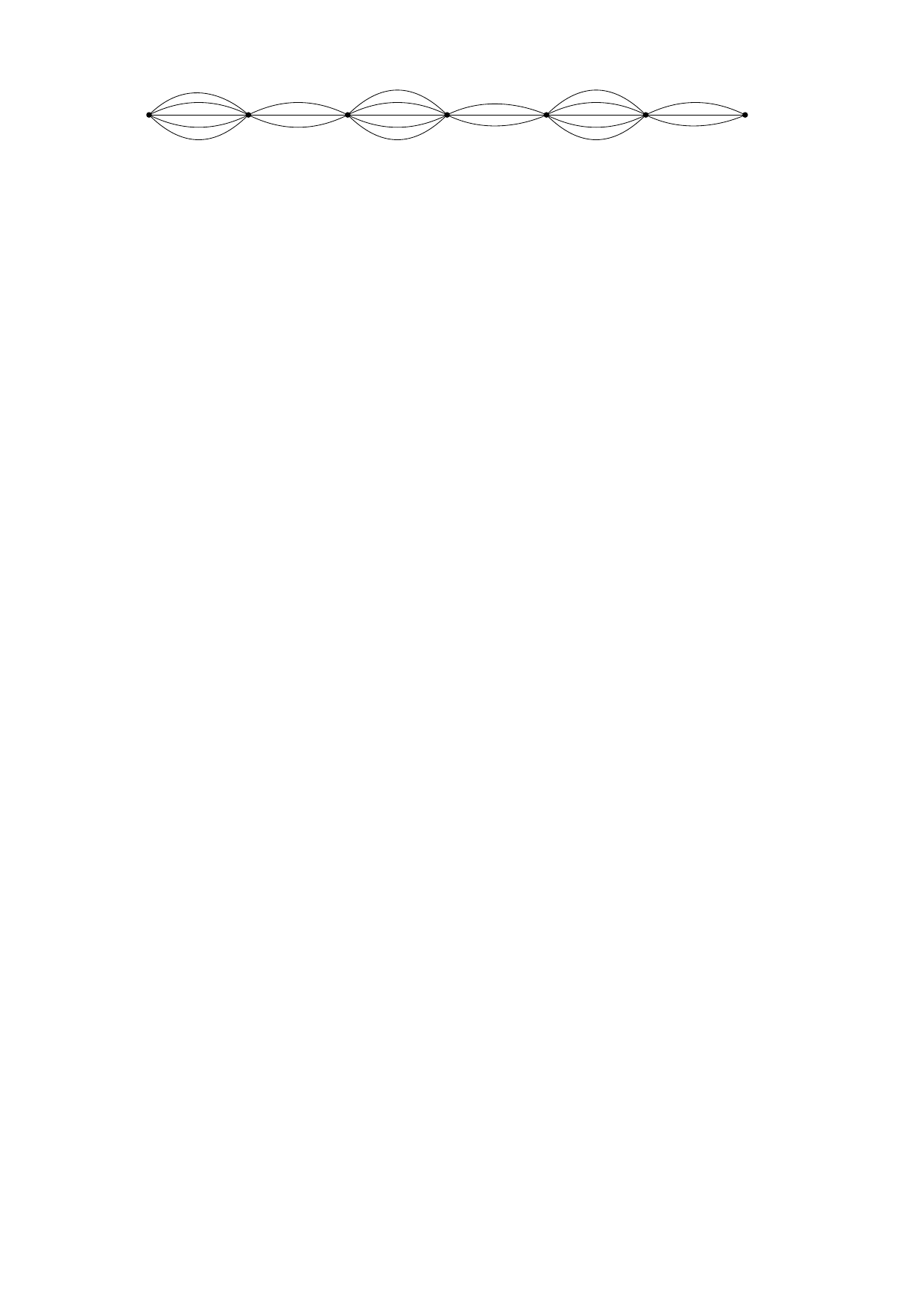} 
    \caption{The monoculture \(B_{A;6}\) where \(A = (5,3)\)}
    \label{fig:monoculture-3-5}
\end{figure}

Not that Lemma \ref{lemma:easy_upper_bound_gon} immediately gives us the following.

\begin{corollary}\label{corollary:upper_bound_monoculture} If $A=(a'_1,\ldots,a'_s)$, then
\[\gon(B_{A;n})\leq \textrm{lcm}(a'_1,\ldots,a'_s).\]
\end{corollary}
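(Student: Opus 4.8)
The plan is to derive Corollary~\ref{corollary:upper_bound_monoculture} as a direct specialization of Lemma~\ref{lemma:easy_upper_bound_gon}. The key observation is that an $A$-monoculture banana path $B_{A;n}$ is itself a banana tree (indeed a banana path), so the general upper bound already applies; all that remains is to identify the multiset of edge-bunch sizes and compute its least common multiple.

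First I would recall that, by definition, $B_{A;n}$ has edge-bunch sizes $a_1, \ldots, a_n$ where $a_i = a'_j$ for the unique $j \in \{1,\ldots,s\}$ with $i \equiv j \pmod s$. Thus every $a_i$ is equal to one of $a'_1, \ldots, a'_s$, and conversely (when $n \geq 1$) each value appearing in the sequence is drawn from $\{a'_1,\ldots,a'_s\}$. Consequently the set of distinct values occurring among $a_1,\ldots,a_n$ is a subset of $\{a'_1,\ldots,a'_s\}$.

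Next I would apply Lemma~\ref{lemma:easy_upper_bound_gon} to $G = B_{A;n}$ with edge-bunch multiset $\{a_1,\ldots,a_n\}$, giving
\[
\gon(B_{A;n}) \leq \lcm(a_1,\ldots,a_n).
\]
The final step is to observe that the least common multiple depends only on the underlying set of values, and since $\{a_1,\ldots,a_n\}$ has the same distinct entries as a subset of $\{a'_1,\ldots,a'_s\}$, we have $\lcm(a_1,\ldots,a_n)$ dividing $\lcm(a'_1,\ldots,a'_s)$, hence $\lcm(a_1,\ldots,a_n) \leq \lcm(a'_1,\ldots,a'_s)$. Chaining the two inequalities yields the claim.

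I do not expect any genuine obstacle here, since the statement is an immediate corollary. The only point requiring mild care is the edge case where $n < s$, so the repeating pattern stops partway and not every $a'_j$ actually appears as an edge bunch; in that situation $\lcm(a_1,\ldots,a_n)$ may be a proper divisor of $\lcm(a'_1,\ldots,a'_s)$, but the inequality still holds in the correct direction. A second minor point is the degenerate case $n = 0$, where there are no edge bunches at all and the graph is a single vertex of gonality $0$; the bound holds trivially since an empty lcm is taken to be $1$.
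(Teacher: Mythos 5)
Your proof is correct and matches the paper's treatment, which simply derives the corollary as an immediate consequence of Lemma~\ref{lemma:easy_upper_bound_gon} together with the observation that the edge-bunch sizes of $B_{A;n}$ all lie in $\{a'_1,\ldots,a'_s\}$, so their lcm divides $\lcm(a'_1,\ldots,a'_s)$. One minor slip in your side remark: a single vertex has gonality $1$, not $0$ (the zero divisor has rank $0$), but the bound $1\leq 1$ still holds in that degenerate case.
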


\section{Computing gonality and other invariants of banana trees} \label{section:algorithms}

In this section we consider how to compute the gonality of banana trees.  Our first hope might be that a lower bound can be found through the scramble number of graphs.  It turns out scramble number admits a nice combinatorial characterization, which unfortunately lets us quickly see that usually scramble number will be strictly smaller than gonality.

\begin{theorem}\label{theorem:banana_path_scw_sn}
   For a banana tree \(G\), let $k$ be the maximum positive integer such that there is a connected subgraph with at least \(k\) vertices with at least \(k\) edges between each adjacent pair. Then, \(\mathrm{sn}(G)=\mathrm{scw}(G)=k\).
\end{theorem}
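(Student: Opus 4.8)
The plan is to sandwich both quantities between $k$ and $k$ using the known inequality $\sn(G)\le\scw(G)$ (Theorem 1.1 of \cite{screewidth-og}). It therefore suffices to prove a lower bound $\sn(G)\ge k$ and an upper bound $\scw(G)\le k$; combined with $\sn(G)\le\scw(G)$ these force $\sn(G)=\scw(G)=k$.

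For the lower bound, let $H$ be a connected subgraph witnessing the value $k$: it has at least $k$ vertices, and every edge bunch of $H$ has at least $k$ edges. I would build a scramble $\mathcal{S}$ whose eggs are the singletons $\{v\}$ for $v\in V(H)$, each of which is trivially connected. Since the eggs are pairwise disjoint single vertices, every hitting set must contain each vertex of $V(H)$, so $h(\mathcal{S})=|V(H)|\ge k$. For the egg-cut number, note that any egg-cut $T$ must place two eggs, i.e. two vertices $u,v\in V(H)$, into different components of $G-T$. Because the underlying simple graph of $G$ is a tree and $H$ is connected, the unique path between $u$ and $v$ lies inside $H$; to disconnect $u$ from $v$ one must delete \emph{every} edge of at least one bunch along this path, and each such bunch has at least $k$ edges. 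Hence $|T|\ge k$, giving $e(\mathcal{S})\ge k$ and therefore $\|\mathcal{S}\|=\min\{h(\mathcal{S}),e(\mathcal{S})\}\ge k$, so $\sn(G)\ge k$.

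For the upper bound, I would produce a tree-cut decomposition of width at most $k$ by contracting the ``large'' bunches of the underlying tree. Let $T$ be obtained from the underlying tree of $G$ by contracting precisely those bunches with at least $k+1$ edges, so that each node is a connected group of vertices (a component of the subgraph keeping only the bunches of size at least $k+1$) and each link is a surviving bunch of size at most $k$. The key structural point is that when the decomposition tree is a contraction of the underlying tree, the two endpoints of any edge bunch map to nodes that are equal or adjacent in $T$, so each bunch runs along a single link and no edge passes through a node without an endpoint inside it. Consequently each node weight equals the number of vertices in that node, and each link weight equals the size of the corresponding bunch; the latter is at most $k$ by construction. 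The crucial step is bounding the node weights: by the maximality of $k$, there is no connected subgraph with at least $k+1$ vertices all of whose bunches have at least $k+1$ edges, so every component of the ``size at least $k+1$'' subgraph, hence every node of $T$, contains at most $k$ vertices. Thus all node and link weights are at most $k$, giving $\scw(G)\le k$.

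The main obstacle is the upper bound, specifically verifying that contracting exactly the bunches of size at least $k+1$ simultaneously keeps every link weight at most $k$ and, via the maximality of $k$, keeps every node weight at most $k$. I would also need to carefully confirm the simplifying claim that in a contraction-based tree-cut decomposition no edge passes through a node, so that node weights reduce cleanly to vertex counts rather than involving edges routed through bags.
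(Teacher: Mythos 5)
Your proposal is correct and follows essentially the same route as the paper: a singleton scramble on the witnessing subgraph for the lower bound $\sn(G)\geq k$, and a tree-cut decomposition obtained by merging vertices joined by more than $k$ edges (your contraction of bunches of size $\geq k+1$ is exactly the paper's iterative bag-building) for the upper bound $\scw(G)\leq k$, sandwiched via $\sn(G)\leq\scw(G)$. Your justification of the egg-cut bound via the unique tree path between two eggs is in fact slightly more careful than the paper's one-line version, and the two worries you flag at the end are both resolved exactly as you sketch them.
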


\begin{proof} First  we construct a scramble of order \(k\). Choose $k$ vertices $v_{i_1},\ldots,v_{i_k}$ forming a connected subgraph $G'$, where between each adjacent pair of vertices there are at least $k$ edges.  Let $\mathcal{S}$ be the scramble $\{V_1,\ldots,V_k\}$ where $V_{j}=\{v_{i_j}\}$; that is, the scramble whose eggs are precisely the vertices of $G'$.  We have $h(\mathcal{S})= k$
 since there are $k$ disjoint eggs, and the smallest egg-cut is found by choosing the smallest edge bunch in $G'$ and deleting all edges in it; thus $e(\mathcal{S})\geq k$.  Thus we have $\sn(G)\geq \|\mathcal{S}\|\geq k$.


We now construct a tree-cut decomposition of width at most \(k\).  Start with any leaf vertex, \(w_{i_1}\), and place it in a bag.  Let $w_{i_2}$ be its neighbor. Add \(w_{i_2}\) to this bag if \(E(w_{i_1},w_{i_2})>k\); otherwise, start a new bag for \(w_{i_2}\).  Repeat this process for each neighbor of $w_{i_2}$:  add it to $w_{i_2}$'s bag if there are more than $k$ edges between them, or give it its own new bag.  Repeat this process until all vertices have been placed in a bag.    Arrange these bags in a tree so that two bags are adjacent if and only if they have an adjacent vertex, giving us a tree-cut decomposition \(\mathcal{T}\).  By our choice of when we create a new bag, the maximum link adhesion is at most \(k\). There is no bag adhesion, and the maximum size of a bag is at most \(k\):  a larger bag could only result from a connected subgraph of at \(k+1\) consecutive vertices with at least \(k+1\) edges connecting each adjacent pair, contradicting our choice of \(k\). As a result, we have \(\mathrm{scw}(G) \leq w(\mathcal{T})\leq  k\).

 Taken together, we have
 \[k\leq \sn(G)\leq \scw(G)\leq k,\]
 implying that \(\mathrm{sn}(G)=\mathrm{scw}(G)=k\).
\end{proof}

It is very quick to see from this argument that gonality is typically strictly larger than scramble number.  Assume $\sn(G)=k$, and pick an induced subgraph $G'$ on $k$ vertices with at least $k$ edges between each adjacent pair.  If any pair has strictly more than $k$ edges, then we cannot freely move $k$ chips around on this subgraph, and the only hope to have gonality $k$ would be to place $1$ chip on each vertex in $G'$.  However, if any edge bunch in $G$ outside of $G'$ has more than one edge, then this divisor does not have positive rank.  Even if all edge bunches in $G'$ consist of precisely $k$ edges, there is no guarantee that $k$ chips placed on a single vertex  of $G'$ would be able to reach every vertex of $G$.

We now develop an tool that will help in computing the gonality of certain banana trees.  
Let $n\geq 2$, let $v$ be a vertex of a banana tree, and let $k$ be a nonnegative integer.  Define the function \(f(G,v,k)\) as the minimum degree of an effective divisor $D$ such that $r(D+k\cdot v)>0$.  Informally, 
\(f(G,v,k)\) is the minimum number of chips we would need to place on \(G\) to create a positive rank divisor, assuming we get to start with \(k\) chips on $v$.  In particular, \(f(G,v,0)=\mathrm{gon}(G)\) for any vertex $v$.  In the case that $G$ is a banana path, we let $f(G,k)$ denote $f(G,v_0,k)$; that is, if no vertex is specified, we assume we are considering the leftmost vertex.

\begin{proposition}\label{prop:recursion_step_one}  Let $v$ be a leaf vertex of a banana tree $G$, and let $G'=G-v$ be the banana tree obtained by deleting $v$.  Let $w$ be the unique neighbor of $v$ in $G$, and let $a=|E(v,w)|$. Then
    \[f(G,v,0)=\min \{1 + f(G', w, 0), a+f(G', w,a)\}\]
    and
    \[f(G,v,k)=\min\{f(G',w,a\cdot \lfloor k/a\rfloor),a\cdot\lceil k/a\rceil-k+f(G',w,a\cdot \lceil k/a\rceil)\}\]
for \(k>0\).
\end{proposition}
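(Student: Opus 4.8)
The plan is to parametrize effective divisors $D$ by the total number of chips that they, together with the $k$ free chips, place on the leaf $v$, and then to push the optimization down onto the graph $G'=G-v$. Write $D^{*}=D+k\cdot v$, let $c=D(v)+k$ be the total number of chips on $v$ in $D^{*}$, and let $D'=D|_{G'}$ be the restriction of $D$ to $G'$, so that $\deg(D)=(c-k)+\deg(D')$ and $D^{*}=D'+c\cdot v$. Because $v$ is a leaf joined to $w$ by the $a$-edge bunch, the only interaction between $v$ and the rest of the graph is through legal adjacency moves across this bunch, each transferring exactly $a$ chips. Setting $P=a\lfloor c/a\rfloor$, the maximum number of chips that can be pushed off $v$ onto $w$, I would first establish a transfer lemma: for every $u\in V(G')$, the divisor $D^{*}$ reaches $u$ on $G$ if and only if $D'+P\cdot w$ reaches $u$ on $G'$. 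Since positive rank means ``reaches every vertex,'' this identifies the condition ``$D^{*}$ reaches all of $G'$'' with ``$D'+P\cdot w$ has positive rank on $G'$,'' whose minimal $\deg(D')$ is by definition $f(G',w,P)$.

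I would prove the transfer lemma using $u$-reduced divisors, and this is the step I expect to be the main obstacle. The easy direction lifts any legal adjacency sequence on $G'$ to one on $G$ fixing the chips on $v$: a firing set $S\subseteq V(G')$ with $w\in S$ is replaced by $S\cup\{v\}$, so the $v$–$w$ bunch stays internal and $v$ is untouched. For the converse I would argue the contrapositive: if $D'+P\cdot w$ does not reach $u$, let $R'$ be its $u$-reduced form on $G'$, so $R'(u)=0$, and set $R=R'+(c-P)\cdot v$ on $G$. Pushing the $\lfloor c/a\rfloor$ blocks and lifting the $G'$-reduction shows $R\sim_G D^{*}$, and the key point is that $R$ is $u$-reduced on $G$: any nonempty firing set $S\subseteq V(G)-\{u\}$ either contains $v$ with $w\notin S$ (illegal, since $v$ carries only $c-P=c\bmod a<a$ chips while its out-degree is $a$), or else induces on $G'$ a firing set whose out-degrees only increase, hence illegal because $R'$ is $u$-reduced. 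Uniqueness of reduced divisors then gives $R(u)=R'(u)=0$, so $D^{*}$ does not reach $u$. I would separately treat reaching the leaf $v$: if $c\ge 1$ this is automatic, while if $c=0$ a chip can reach $v$ only by firing all of $G'$ across the bunch, which forces $D'$ to be equivalent on $G'$ to an effective divisor with at least $a$ chips on $w$.

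With the transfer lemma in hand, the cost of an admissible $D$ with a given value of $c$ is $(c-k)+f(G',w,a\lfloor c/a\rfloor)$, subject to $c\ge k$ and to reachability of $v$. I would record two monotonicity facts, each proved by adding or removing a single chip on $w$: the map $m\mapsto f(G',w,m)$ is non-increasing and drops by at most $1$ per unit, i.e. $0\le f(G',w,m)-f(G',w,m+1)\le 1$. On any block where $\lfloor c/a\rfloor$ is constant the cost is increasing in $c$, so it suffices to evaluate it at the smallest admissible $c$ in each block; the at-most-one-drop bound then shows that the block immediately above the minimal one dominates all higher blocks. For $k>0$ the two surviving candidates are $c=k$, giving $f(G',w,a\lfloor k/a\rfloor)$, and $c=a\lceil k/a\rceil$, giving $a\lceil k/a\rceil-k+f(G',w,a\lceil k/a\rceil)$, which is exactly the claimed formula. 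For $k=0$ the candidates $c=1$ and $c=a$ give $1+f(G',w,0)$ and $a+f(G',w,a)$; it remains to show the leftover case $c=0$ is dominated by the second term, and here the reach-$v$ analysis lets me replace $D'$ by an equivalent divisor of the form $E'+a\cdot w$ of positive rank on $G'$, whence $\deg(D')=\deg(E')+a\ge a+f(G',w,a)$. Combining these cases yields both displayed identities.
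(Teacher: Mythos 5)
Your proposal is correct, and it reaches the recursion by a genuinely different route than the paper. The paper proves each formula by a two-sided bound: the upper bounds come from exhibiting the two candidate divisors explicitly, and the lower bounds come from taking an optimal divisor, passing to its $w$-reduced form, and splitting on whether $D(v)=0$ or $D(v)>0$; the key step there is the claim that $D(v)+k$ must be a multiple of $a$, justified by the informal observation that otherwise ``we could have added one fewer chip.'' You instead prove an exact transfer lemma --- $D'+c\cdot v$ has positive rank on $G$ iff $D'+a\lfloor c/a\rfloor\cdot w$ has positive rank on $G'$ (plus the separate reach-$v$ condition) --- via $u$-reduced divisors for each target $u$, and then carry out a clean one-dimensional optimization over $c=D(v)+k$, using the facts that $m\mapsto f(G',w,m)$ is non-increasing and $1$-Lipschitz to show that only the two block-minima $c=k$ (resp.\ $c\in\{0,1\}$) and $c=a\lceil k/a\rceil$ (resp.\ $c=a$) can be optimal. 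I checked the delicate points: the lift of $G'$-firings to $G$ by adjoining $v$ to any set containing $w$, the verification that $R'+(c\bmod a)\cdot v$ is $u$-reduced on $G$ (both the $v\in S,\ w\notin S$ case and the induced-set case), and the domination of higher blocks by the $(j_0+1)$-st; all are sound, including the degenerate case $a\mid k$ where the two candidates coincide. What your approach buys is rigor exactly where the paper is hand-wavy --- the elimination of intermediate values of $c$ is a consequence of the Lipschitz bound rather than an appeal to ``the behavior would have been the same'' --- and the transfer lemma gives the full cost function in $c$, not just its minimum; what it costs is length, since the paper's direct construction-plus-reduction argument dispatches the upper bounds in a few lines. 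The only part you should be sure to write out carefully is the claim that an effective divisor reaches $v$ when $c=0$ only if $D'$ is equivalent on $G'$ to an effective divisor with at least $a$ chips on $w$, since that is what makes the $c=0$ case dominated by the $a+f(G',w,a)$ term.
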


Before we present a formal proof, we provide some intuition behind these formulas, starting with $f(G,v,0)=\gon(G)$.  If we have a positive rank divisor, then either we will use the edge bunch incident to $v$ to move chips around, or we will not.  If we do not use this edge bunch, we must have one chip on $v$ (at least one for positive rank, at most one for minimality), and then a winning placement on the remainder of the graph, which operates independently.  If we do use this edge bunch, then at some point the vertex $v$ must have at least $a$ chips on it.  By minimality, we can assume it has a multiple of $a$ chips on it (otherwise some chips would always be stranded there), and by firing $v$ some number of times we may in fact assume are exactly $a$ chips on $v$ at some point.  We can fire these along to $v$, and then optimize on the remainder of the graph.

For $f(G,v,k)$ with $k>0$, there are essentially two strategies:  if we do not add any chips to $v$ to obtain a positive rank divisors, then the best we can do is move as many chips of the $k$ chips from $v$ to $w$, and then focus on the remainder of the graph.  If we do add some chips to $v$, by minimality we should aim to get a multiple of $a$ chips on $v$ (otherwise some will be stranded, and we may as well have added fewer), fire them along to $w$, and then optimize from there.

\begin{proof}[Proof of Proposition \ref{prop:recursion_step_one}]
    First we prove the claim for $f(G,v,0)$.  Let $D'$ be a positive rank divisor on $G'$ with $\deg(D')=\gon(G')=f(G',w,0)$.  Then, the divisor $D=1\cdot v+D'$ on $G$ has positive rank: it already places a chip on $v$, and to place a chip on another vertex $u$ we can follow the same legal adjacency moves that would allow $D'$ to place a chip on $u$ in  $G'$.  Thus $f(G,v,0)=\gon(G)\leq \deg(D)=1+\gon(G')=1+f(G',w,0)$.  Another positive rank divisor $D$ on $G$ can be constructed by choosing an effective divisor $D'$ on $G'$ with $r(D')>0$, $D'(w)\geq a$, and $\deg(D')=a+f(G',w,a)$, and defining $D=D'+0\cdot v_0$ on $G$.  This has positive rank since the legal adjacency move from $w$ to $v$ places $a$ chips on $v_0$, and a chip can be placed on any other vertex by mirroring adjacency moves for $D'$ on $G'$ since $r(D')>0$.  Thus $f(G,v,0)=\gon(G)\leq \deg(D)=\deg(D')=a_1+f(G',w,a_1)$.  This gives us
    \[f(G,v,0)\leq \min \{1 + f(G',w, 0), a+f(G',w, a)\}.\]
    
    To lower bound $f(G,0)$, let $D$ be a $v$-reduced divisor on $G$ with $r(D)>0$ and $\deg(D)=\gon(G)=f(G,v,0)$.  Consider $D_{w}$, the unique $w$-reduced divisor equivalent to $D$.  If $D=D_{w}$, then $D_{w}(v)=D(v)\geq 1$.  Moreoever, the divisor $D':=D-D(v)\cdot v$ has positive rank when considered as a divisor on $G'$.  It follows that $\deg(D)=D(v)+\deg(D')\geq 1+\gon(G')=1+f(G',w,0)$.
    If $D\neq D_{w}$, then since $D$ is $v$-reduced we obtain $D_{w}$ from $D$ by chip-firing $v$ a positive number of times.  It follows that $D_{w}(w)\geq a$.  We also have that $D'=D_{w}-D_{w}(v)\cdot v$ has positive rank when considered on $G'$; since it has at least $a$ chips on $w$, we know $\deg(D')\geq a+f(G',w,a)$.  Thus $\deg(D)=D_{w}(v)+\deg(D')\geq a+f(G',w,a) $.  Between these two cases, we know that $f(G,v,0)=\deg(D)$ is lower bounded by at least one of $1+f(G',w,0)$ and $a_1+f(G',w,a_1)$.  Since it is also upper bounded by their minimum, it must equal their minimum, giving us the claimed formula.

   Now we handle the case where $k>0$.  First we note that for any divisor with $k$ chips on $v$, we may move $a\cdot \lfloor k/a\rfloor$ chips to $w$ by firing $v$ a total of $\lfloor k/a\rfloor$ times.  Let $D'$ on $G'$ have $D'(w)\geq a\cdot \lfloor k/a\rfloor$ with $r(D')>0$ and $\deg(D')=a\cdot \lfloor k/a\rfloor+f(G',w,a\cdot \lfloor k/a\rfloor)$.  Then, $k\cdot v+(D'-a\cdot \lfloor k/a\rfloor\cdot w)$ has positive rank on $G$: we may freely move $\lfloor k/a\rfloor$ chips to $w$, and mirror chip movements from $D'$ on $G'$ from there.  Thus $f(G,v,k)\leq \deg(D')-a\cdot \lfloor k/a\rfloor=f(G',w,a\cdot \lfloor k/a\rfloor)$.  For the other upper bound, let $D'$ on $G'$ be an effective divisor such that $r(D'+a\cdot \lceil k/a\rceil \cdot w)>0$ and $\deg (D')=f(G',w,a\cdot \lceil k/a\rceil)$.  Consider the divisor $D=a\cdot\lceil k/a\rceil\cdot v+D'$ on $G$. We may freely move $a\cdot\lceil k/a\rceil$ chips from $v$ to $w$, and from there we may emulate the firing moves of $D'$ on $G'$ to place chips elsewhere, so $r(D)>0$.  Note also that $D$ places at least $k$ chips on $v$.  Thus, $f(G,v,k)\leq \deg(D)-k=a\cdot\lceil k/a\rceil-k+\deg(D')=a\cdot\lceil k/a\rceil-k+f(G',w,a\cdot \lceil k/a\rceil).$  We therefore have
      \[f(G,v,k)\leq \min\{f(G',w,a\cdot \lfloor k/a\rfloor),a\cdot\lceil k/a\rceil-k+f(G',w,a\cdot \lceil k/a\rceil)\}.\]

     For the lower bound, let ${D}$ be an effective divisor on $G$ such that $r(k\cdot v+{D})>0$ and $\deg({D})=f(G,v,k)$.  Let $\overline{D}=k\cdot v+{D}$.
     Consider $\overline{D}_{w}$, the unique $w$-reduced divisor equivalent to $\overline{D}$.  First assume $D(v)=0$ (so $\overline{D}(v)=k$).  We know that $\overline{D}_{w}(w)\geq a\cdot \lfloor k/a\rfloor$, since $v$ could be fired $\lfloor k/a\rfloor$ times.  Note that $\overline{D}_{w}-\overline{D}_{w}(v)\cdot v$ has positive rank considered as a divisor on $G'$.  Note also that $k=\overline{D}_{w}(v)+\lfloor k/a\rfloor\cdot a$. Thus 
     \begin{align*}
         k+\deg(D)=&\deg(\overline{D})
\\\geq&\deg(\overline{D}_{w})\\=&\overline{D}_w(v)+a\cdot \lfloor k/a\rfloor+f(G',w,a\cdot \lfloor k/a\rfloor) \\=&k+f(G',w,a\cdot \lfloor k/a\rfloor).
     \end{align*}  It follows that $\deg(D)\geq f(G',w,a\cdot \lfloor k/a\rfloor)$.
     
Now assume $D(v)>0$. We claim that $D(v)+k$ is a multiple of $a$.  Otherwise, we could have added one fewer chip to $v$, and still been able to move just as many chips to $w$, at which point the behavior of the divisor would have been the same. The smallest multiple of $a$ that is at least $k>0$ is $\lceil k/a\rceil$. Thus $\overline{D}(v)=D(v)+k\geq a\cdot \lceil k/a\rceil$.  It follows that $\overline{D}_{w}(w)\geq a\cdot \lceil k/a\rceil$.  We also have that $\overline{D}_{w}(v)=0$, since $D(v)+k$ is a multiple of $a$.  Letting $\overline{D}'$ be the divisor $\overline{D}_{w}$ considered on $G'$, we have that $r(\overline{D}')>0$.  Since $\overline{D}'(w)\geq a\cdot \lceil k/a\rceil$, it follows that $\deg(\overline{D})=\deg{(\overline{D'})}\geq a\cdot \lceil k/a\rceil+f(G',w,a\cdot \lceil k/a\rceil)$.  Now, $\deg(D)+k\geq a\cdot \lceil k/a\rceil+f(G',w,a\cdot \lceil k/a\rceil)$, so $\deg(D)\geq a\cdot \lceil k/a\rceil-k +f(G',w,a\cdot \lceil k/a\rceil)$.

Taken together, we have that $f(G,k)=\deg(D)$ is lower bounded by at least one of $f(G',a\cdot \lfloor k/a\rfloor)$ and $a\cdot \lceil k/a\rceil-k +f(G',w,a\cdot \lceil k/a\rceil)$. Since it is also upper bounded by their minimum, it must be equal to their minimum, completing the proof.
\end{proof}

Since this theorem requires the vertex $v$ to be a leaf, it is not possible to apply it recursively to all banana trees, since the neighbor $w$ might not be a leaf.  However, in the case of banana paths, the vertex $w$ will always be a leaf, meaning we may recursively compute the gonality of any banana path with this result.  This allows us to prove Theorem \ref{theorem:polytime_gonality}, that the gonality of a banana path on $n+1$ vertices can be computed in $O(n^3)$ time.
\begin{proof}[Proof of Theorem \ref{theorem:polytime_gonality}]  
    The algorithm implements the recursion shown in Proposition \ref{prop:recursion_step_one} with memoization. Furthermore, because the gonality of any $n$-vertex graph $G$ is at most $n$, any edge bunch of size $>n$ effectively splits the instance into two (it is never efficient to send chips along such a bunch). Therefore, $f(G, k) = 0$ for all $k \geq n^2 + 1$ since at most $n$ chips are left behind as remainder per fire. 

    \begin{algorithm}[H]
    \caption{} 
    \label{banana-path-gonality}
        \begin{algorithmic}
            \State \textbf{\underline{Banana Path Gonality}}($G$):
            \Indent
                \State subgraphs $\gets [G]$, gonality $\gets 0$
                \While{$\exists G_i \in$ subgraphs which contains edge bunch of size $>|V(G_i)|$}
                    \State Split $G_i$ into $G^1_i$ and $G^2_i$ along heavy edge bunch
                \EndWhile{}
                \State
                \For{$G_i \in $ subgraphs}
                    \State $n_i \gets |V(G_i)|$, $f \gets \mathbb{Z}^{n_i \times (n_i^2 +1)}$
                    \State gonality $+=$ Recursion($G_i$, $0$, $f$)
                \EndFor{}
                \Return gonality 
            \EndIndent
            
            \State
            \State \textbf{\underline{Recursion}}($G, k, f$):
            \Indent
                \If{$f[n][k]$ is not empty} 
                    \Return $f[n][k]$ \Comment{Memoization}
                \EndIf{}
                \State
                \If{$n = 1$} \Comment{Base Case}
                    \If{$k= 0$}
                        $f[n][k] = 1$
                    \Else{}
                        $f[n][k] = 0$
                    \EndIf{}
                    \State \Return $f[n][k]$
                \EndIf{}
                \State
    
                \If{$k=0$} 
                    $f[n][k] = \min\{1 + \text{Recursion}(G', 0, f), a + \text{Recursion}(G', a, f) \}$ \Comment{Recurse}
                \Else{}
                    \State $f[n][k] = \min\{\text{Recursion}(G',a\cdot \lfloor k/a\rfloor,f),a\cdot\lceil k/a\rceil-k+\text{Recursion}(G',a\cdot \lceil k/a\rceil, f)\}$
                \EndIf{}
            \EndIndent
            
        \end{algorithmic} 
    \end{algorithm} 

    \noindent The time complexity of the algorithm is given by the number of unique subproblems solved in the recursion scaled by the time required to solve each one. There are 
    \[
    \sum_{\text{subgraphs} \{G_i\}_{i=1}^{m}} n_i \cdot(n_i^2 + 1) \leq O(n^3)
    \]
    many subproblems (where we have used the fact that $\sum_{i} n_i = n$). Each subproblem takes constant time to solve, therefore the total time complexity of Algorithm \ref{banana-path-gonality} is $O(n^3)$. 
\end{proof}

 We close this section with a treatment of \emph{banana stars}.  These are graphs whose underlying simple graph is the star tree, consisting of one vertex $v_0$ adjacent to all others (none of which are adjacent to one other).  We represent a banana star as $S_{\left(a_1^{r_1},a_2^{r_2},\ldots,a_k^{r_k}\right)}$, where $a_1>a_2>\cdots>a_k$ and $r_i$ denotes the number of edge bunches of size $a_i$.  The banana star $S_{(4^1,3^3,2^3,1^2)}$ is illustrated in Figure \ref{figure:banana_star_example}.

 \begin{figure}[hbt]
     \centering
     \includegraphics{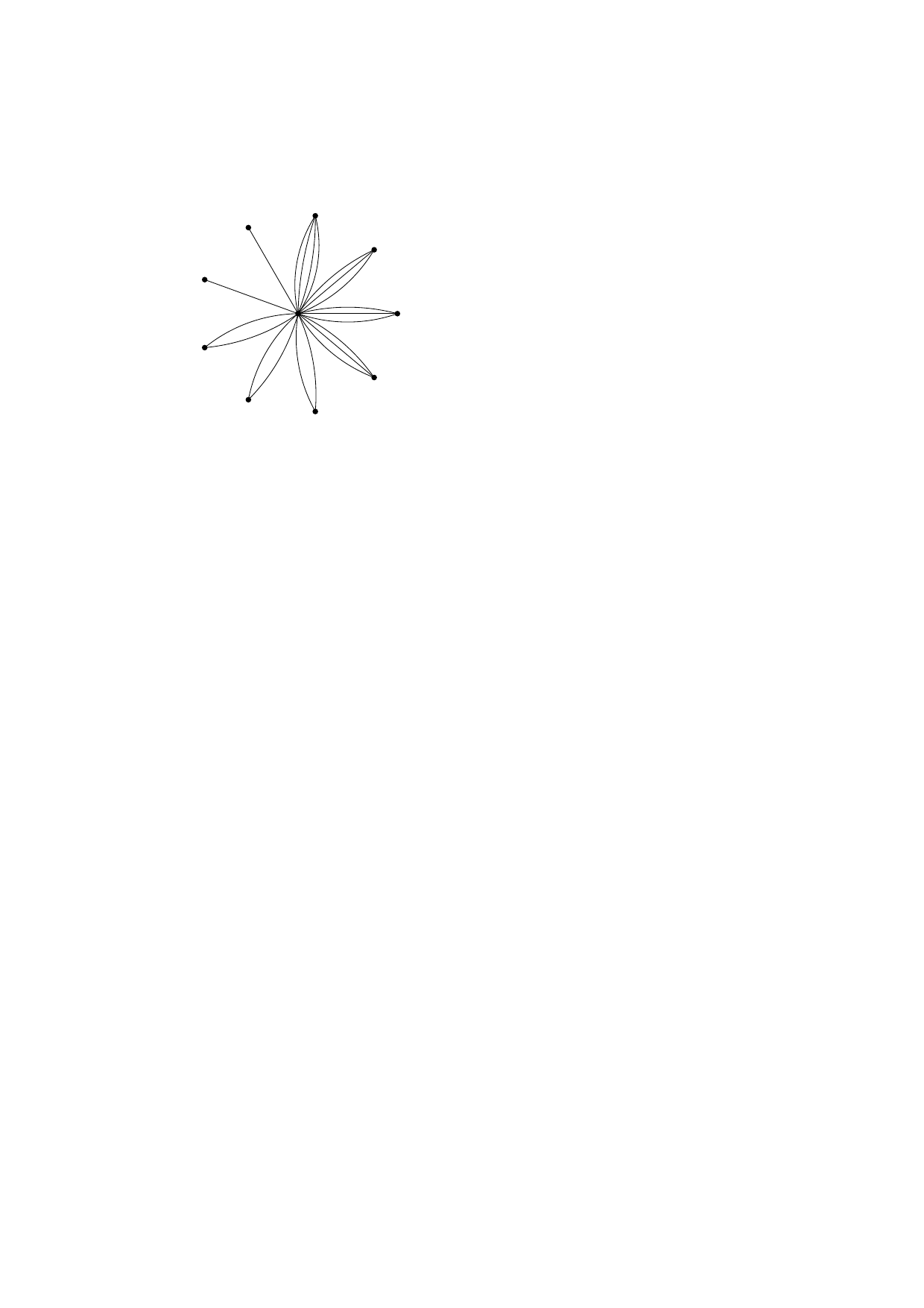}
     \caption{The banana star $S_{(4^1,3^3,2^3,1^2)}$.}
     \label{figure:banana_star_example}
 \end{figure}

\begin{theorem}\label{theorem:banana_stars}  Let $1\leq a_1\leq \cdots \leq a_n$, and let $G=S_{\left(a_1^{r_1},a_2^{r_2},\ldots,a_k^{r_k}\right)}$.  We have
\[\gon(G)=\min_{0\leq j\leq k}\left\{\sum_{i=1}^j r_i+a_{j+1}\right\},\]
where we take $a_{k+1}$ to be $1$.
\end{theorem}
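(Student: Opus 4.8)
The plan is to prove the two inequalities separately, pitting a family of explicit divisors against a lower bound obtained by reducing everything to the center vertex $v_0$. Say that a divisor \emph{reaches} a vertex $u$ if it is equivalent to an effective divisor placing at least one chip on $u$; as noted after the definition of gonality, positive rank is exactly reachability of every vertex. Throughout I would write $b=D(v_0)$ for the number of chips on the center, and use the basic structural fact that a single adjacency move from $v_0$ to a leaf $\ell$ sends exactly $a_\ell$ chips and is legal precisely when $b\geq a_\ell$.

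For the upper bound I would exhibit, for each $j\in\{0,\dots,k\}$, a positive-rank divisor $D_j$ of the claimed degree: place $a_{j+1}$ chips on $v_0$ and one chip on each leaf whose bunch is larger than $a_{j+1}$. There are $\sum_{i=1}^j r_i$ such leaves, so $\deg(D_j)=\sum_{i=1}^j r_i+a_{j+1}$. Each of those leaves already carries a chip, the center carries $a_{j+1}\geq 1$, and every remaining leaf $\ell$ has $a_\ell\leq a_{j+1}$, so one legal adjacency move from $v_0$ deposits $a_\ell\geq 1$ chips on $\ell$ while leaving the rest of the divisor effective. Hence $D_j$ reaches every vertex, and $\gon(G)\leq\sum_{i=1}^j r_i+a_{j+1}$ for every $j$, which yields the $\leq$ direction after taking the minimum.

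For the lower bound I would take a positive-rank divisor of minimum degree and, since reduction preserves both degree and linear-equivalence class, assume it is $v_0$-reduced; then $0\leq D(\ell)\leq a_\ell-1$ for every leaf. The crux is a reachability characterization for such $D$: it reaches the center iff $b\geq 1$, and it reaches a leaf $\ell$ iff $b\geq a_\ell$ or $D(\ell)\geq 1$. I would prove this by running Dhar's burning algorithm \cite{dhar} from $\ell$. If $b<a_\ell$ the center ignites, and since every leaf carries fewer chips than its bunch size, the fire then consumes the whole graph; thus $D$ is already $\ell$-reduced and can place a chip on $\ell$ only when $D(\ell)\geq 1$. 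If instead $b\geq a_\ell$, the fire from $\ell$ dies at $v_0$, so the complement $V(G)\setminus\{\ell\}$ is a legal firing set that deposits $a_\ell\geq 1$ chips on $\ell$.

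Granting this characterization, reaching the center forces $b\geq 1$, and reaching each leaf with $a_\ell>b$ forces $D(\ell)\geq 1$, so $\deg(D)\geq b+|\{\ell: a_\ell>b\}|$. This right-hand side is a step function of the integer $b\geq 1$ that strictly increases between consecutive bunch sizes and can drop only as $b$ passes a bunch size, so its minimum is attained at some $b=a_{j+1}$ (with the convention $a_{k+1}=1$ supplying the smallest admissible value $b=1$), where it equals $a_{j+1}+\sum_{i=1}^j r_i$. Minimizing over $j$ then matches the upper bound and proves equality. The main obstacle is the reachability characterization: once the burning analysis pins down exactly when a $v_0$-reduced divisor can push a chip onto a given leaf, the remaining minimization over $b$ is an elementary one-variable computation.
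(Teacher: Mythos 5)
Your proof is correct and takes essentially the same route as the paper: the identical family of divisors ($a_{j+1}$ chips on $v_0$ plus one chip on each leaf with bunch size exceeding $a_{j+1}$) for the upper bound, and for the lower bound a reduction to the divisor with as many chips as possible on $v_0$ (the $v_0$-reduced divisor) followed by counting the chips forced on the center and on each leaf it cannot reach. Your Dhar-based reachability characterization is just a more explicit justification of the step the paper asserts directly (that an unreached chipless leaf $\ell$ forces $D(v_0)\geq a_\ell$), and your one-variable minimization over $b$ repackages the paper's choice of the chipless leaf of largest bunch size.
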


\begin{proof}
    First we show that for any $j$ there is a positive rank divisor of degree $\sum_{i=1}^j r_i+a_{j+1}$.  Place one chip on each vertex $w$ with $|E(v_0,w)|>a_{j+1}$; there are $\sum_{i=1}^j r_i$ such vertices.  Then place $a_{j+1}$ chips on $v_0$.  Every vertex $w$ either already has a chip, or is adjacent to $v_0$ with at most $a_{j+1}$ edges connecting them, so we can move a chip to $w$ with a legal adjacnecy move from $v_0$.  This divisor has positive rank, so $\gon(G)\leq \sum_{i=1}^j r_i+a_{j+1}$ for all $1\leq j\leq k$.

    For the lower bound, suppose $D$ has positive rank with $\deg(D)=\gon(G)$.  Transform $D$ into the effective divisor $D'$ that maximizes the number of chips on $v_0$. If $D'$ places a chip on every other vertex $w$, then $\deg(D)=n+1=\sum_{i=1}^k r_i+a_{k+1}\geq \min_j\left\{\sum_{i=1}^j r_i+a_{j+1}\right\}$, and our claim holds.  Thus assume $D'$ fails to place a chip on at least one vertex; pick $w$ to be such a vertex with maximum possible  $|E(v_0,w)|$.  Note that   
    $D'(v_0)$ must be at least $|E(v_0,w)|$, since we must be able to move chips from $v_0$ to $w$ through a legal adjacency move.  Choose $j$ such that  $a_{j+1}=|E(v_0,w)|$.  Then every vertex with $a_1,a_2,\ldots,a_j$ edges connecting it to $v_0$ must have a chip, and thre are $\sum_{i=1}^j r_i$ such vertices. It follows that
    \[\deg(D)\geq \sum_{i=1}^j r_i+a_{j+1}\geq \min_j\left\{\sum_{i=1}^j r_i+a_{j+1}\right\},\]
    completing our lower bound.
\end{proof}

As one might expect, these graphs typically have scramble number distinct from gonality.

\begin{proposition}\label{proposition:banana_stars_scramble}  Let $1\leq a_1\leq \cdots \leq a_k$, and let $G=S_{\left(a_1^{r_1},a_2^{r_2},\ldots,a_k^{r_k}\right)}$.  Then 
\[\sn(G)=\max_{1\leq \ell\leq k}\min\left\{a_\ell,1+\sum_{i=1}^\ell r_i\right\}\]
\end{proposition}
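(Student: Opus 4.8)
The plan is to deduce this from Theorem~\ref{theorem:banana_path_scw_sn}, which already computes the scramble number of any banana tree as the largest integer $k$ admitting a connected subgraph on at least $k$ vertices with at least $k$ edges between every adjacent pair; since a banana star is a banana tree, all that remains is to evaluate this $k$ for $G$ and match it to the stated max--min. First I would record the structure of the relevant subgraphs: in a star, any connected subgraph on $m\ge 2$ vertices must consist of the center $v_0$ together with $m-1$ leaves, and the ``at least $m$ edges between adjacent pairs'' condition forces each of those $m-1$ leaf bunches to have size at least $m$. Writing the bunch sizes in decreasing order $a_1>\cdots>a_k$ (as in the definition of $S_{(\cdots)}$), so that $\sum_{i=1}^{\ell}r_i$ counts exactly the leaves joined to $v_0$ by at least $a_\ell$ edges, let $N(m)$ denote the number of leaves whose bunch has size at least $m$. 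Then a valid $m$-vertex subgraph exists precisely when $N(m)\ge m-1$, so by Theorem~\ref{theorem:banana_path_scw_sn},
\[
\sn(G)=\max\{\,m\ge 1 : N(m)\ge m-1\,\}.
\]

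The remaining step is the purely combinatorial identity
\[
\max\{\,m\ge 1 : N(m)\ge m-1\,\}=\max_{1\le \ell\le k}\min\Bigl\{a_\ell,\;1+\sum_{i=1}^{\ell}r_i\Bigr\}.
\]
Intuitively, the $\ell$-th term on the right is the best one can do using \emph{only} the leaves whose bunch has size at least $a_\ell$: there are $\sum_{i=1}^{\ell}r_i$ such leaves, so together with $v_0$ they span $1+\sum_{i=1}^{\ell}r_i$ vertices, while the common size floor among their bunches is $a_\ell$; hence the largest admissible $m$ built from this ``threshold'' set is $\min\{a_\ell,\,1+\sum_{i=1}^{\ell}r_i\}$. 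I would prove the identity by two inequalities.

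For the direction $\sn(G)\ge$ (the max--min), fix the $\ell$ attaining the right-hand maximum and set $m=\min\{a_\ell,\,1+\sum_{i=1}^{\ell}r_i\}$. Then $m\le a_\ell$ ensures each of the $\sum_{i=1}^{\ell}r_i$ threshold leaves has bunch size at least $m$, while $m\le 1+\sum_{i=1}^{\ell}r_i$ ensures there are at least $m-1$ of them; thus $N(m)\ge m-1$ and $\sn(G)\ge m$. For the reverse direction, take a subgraph witnessing $\sn(G)=k$, let $a_{\ell_0}$ be the smallest bunch size appearing among its $k-1$ chosen leaves, and note $a_{\ell_0}\ge k$ while all $k-1$ chosen leaves lie among the leaves with bunch at least $a_{\ell_0}$, giving $\sum_{i=1}^{\ell_0}r_i\ge k-1$. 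Hence the $\ell_0$-th term is at least $\min\{k,\,1+(k-1)\}=k$, so the max--min is at least $k$. Combining the two gives equality.

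I expect the only delicate point to be the bookkeeping that justifies restricting attention to these threshold leaf sets—namely, that an optimal subgraph may always be taken to consist of \emph{every} leaf whose bunch exceeds some cutoff, because enlarging a feasible set of chosen leaves to the full threshold set never decreases the number of usable vertices and never lowers the shared size floor below the cutoff. Once this monotonicity is in place both inequalities are immediate, and it is precisely the decreasing-order convention that converts ``number of leaves with bunch size at least $a_\ell$'' into the prefix sum $\sum_{i=1}^{\ell}r_i$ appearing in the statement.
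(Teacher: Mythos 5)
Your proof is correct and follows essentially the same route as the paper: both reduce the computation to the characterization in Theorem~\ref{theorem:banana_path_scw_sn} and observe that an optimal subgraph may be taken to be $v_0$ together with all leaves whose bunch size meets a threshold $a_\ell$, yielding the term $\min\{a_\ell,\,1+\sum_{i=1}^{\ell}r_i\}$. The only cosmetic difference is that you route the lower bound through Theorem~\ref{theorem:banana_path_scw_sn} and a clean combinatorial identity in $N(m)$, while the paper exhibits the singleton scrambles directly; these are the same construction.
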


\begin{proof}

For a lower bound, we note the scramble $\mathcal{S}$ whose eggs are the singletons $\{v\},\{w_1\},\ldots,\{w_m\}$, where $w_1,\ldots,w_m$ are all leaf vertices with $|E(v,w_i)|\geq a_\ell$, will have order $\min\{a_\ell,1+m\}=\min\{a_\ell,1+\sum_{i=1}^\ell r_i\}$.  Thus
\[\sn(G)\geq \min\left\{a_\ell,1+\sum_{i=1}^\ell r_i\right\}\]
for every choice of $\ell$.

For an upper bound, we refer to Therem \ref{theorem:banana_path_scw_sn}, which states that the scramble number of $G$ is the maximum $k$ such that $G$ has a connected subgraph $G'$ on at least $k$ vertices with at least $k$ edges between any pair of vertices.  If every edge bunch has size $1$, we have $\sn(G)=1$ and our claim holds\footnote{This is because the scramble number of any tree is $1$; see \cite[Corollary 4.2]{new_lower_bound}.}.  Otherwise the vertex set of our connected subgraph $G'$ will consist of $v_0$ together with some leaves. Let $a_\ell$ be the smallest edge bunch appearing in $G'$.  Without loss of generality we may assume that $G'$ contains all leaves $w$ with $|E(v_0,w)|\geq a_\ell$, as adding such leaves will neither decrease the number of vertices nor decrease the size of the smallest edge bunch.  The scramble achieving scramble number has as its eggs the individual vertices of $G'$, and the order of the scramble is $\min\{a_\ell, 1+\sum_{i=1}^\ell r_i\}$.  Thus $\sn(G)$ is equal to  $\min\{a_\ell, 1+\sum_{i=1}^\ell r_i\}$ for some choice of $\ell$. Since we have a lower bound of $\sn(G)$ of the same formula for \emph{all} choices of $\ell$, our claim follows.
\end{proof}

\section{Monotone and unimodal banana graphs} \label{section:monotoneandunimodal}

In this section we find the gonality of sufficiently long banana graphs with repeating edge patterns, and prove that gonality can increase by an arbitrary amount upon the deletion of a single edge.  Both results rely upon the following lemma.

\begin{lemma}\label{lemma:lcm_string}
Let $G$ be a banana path with a string of adjacent vertices $v_k,\ldots,v_{k+\ell}$ with a repeating pattern of edge bunch sizes $a_1,\ldots,a_s$ among those $\ell+1$ vertices, and set $L=\lcm(a_1,\ldots,a_s)$. If $\ell+1\geq sL^2$, and $D$ is a positive rank divisor on $G$, then some effective divisor equivalent to $D$ places at least $L$ chips on the vertices $v_k,\ldots,v_{k+\ell}$.
\end{lemma}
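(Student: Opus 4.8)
My plan is to prove the contrapositive: assuming that \emph{every} effective divisor equivalent to $D$ places at most $L-1$ chips on the window $W=\{v_k,\dots,v_{k+\ell}\}$, I will exhibit a vertex of $W$ on which no equivalent effective divisor can place a chip, contradicting $r(D)>0$. The first step recasts ``placing a chip on $v_j$'' in terms of reduced divisors: for each window vertex $v_j$, positive rank is equivalent to the $v_j$-reduced divisor $(D)_{v_j}$ satisfying $(D)_{v_j}(v_j)\ge 1$, and $(D)_{v_j}(v_j)$ equals the maximum of $D'(v_j)$ over all effective $D'\sim D$. So it suffices to find a $v_j\in W$ at which this maximal occupancy is $0$.

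The second step computes that maximum on a banana path via a greedy two-sided ``carrying'' recursion. Since chips cross the bunch between $v_{i-1}$ and $v_i$ only in multiples of $b_i=|E(v_{i-1},v_i)|$ (by the adjacency-move description of linear equivalence), the largest number of chips one can gather at $v_i$ purely from the left, $A_i$, satisfies $A_i=D(v_i)+b_i\lfloor A_{i-1}/b_i\rfloor$, and symmetrically for the right-gather $B_i$; the maximal occupancy at $v_j$ is $A_j+B_j-D(v_j)$. The key arithmetic point is that crossing a bunch of size $b$ discards the remainder $A\bmod b$, so along a stretch carrying no chips of its own the gather value is non-increasing and cannot stay constant across a full period $a_1,\dots,a_s$: constancy would force every $a_t\mid A$, hence $L\mid A$ and $A\ge L$. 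Under the standing assumption every gather value is at most $L-1$ (gathering $L$ or more chips at a window vertex would already yield the desired divisor), so $A<L$ and $A$ strictly decreases on each chip-free period; thus it reaches $0$ after boundedly many periods and remains $0$, and likewise for $B$ read from the right.

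The third step is a pigeonhole over the periods of $W$. The chips of $D$ lying in $W$ number at most $L-1$, so they meet at most $L-1$ of the at least $L^2-1$ full periods guaranteed by $\ell+1\ge sL^2$; deleting these leaves a chip-free run of many consecutive periods. Entering such a run the left-gather is already $<L$ and dies inside it, and reading from the other end the right-gather dies as well; the two dead regions overlap at an interior vertex $v_j$ carrying no chip of its own, whence $A_j=B_j=D(v_j)=0$ and the maximal occupancy at $v_j$ is $0$, contradicting positive rank. The exterior of $W$ causes no trouble: chips entering from either side cross the window-boundary bunch and so feed the recursion merely as a bounded initial gather value ($\le L-1$ under the assumption), so the same decay applies.

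The step I expect to be the main obstacle is the quantitative matching in this pigeonhole: I must show the gather value dies fast enough that the left-dead and right-dead regions genuinely overlap inside the available run. The crude estimate ``decreases by at least $1$ per period'' only yields death within $L-1$ periods, a factor of two short of what a run of $\approx L$ periods (all that $sL^2$ guarantees) supplies; tightening this requires a sharper bound on how much discarding remainders decreases $A$ per period—in examples the decay is far faster, roughly geometric—together with careful bookkeeping of the constant against $sL^2$. A secondary technical point is to justify rigorously, via Dhar's burning algorithm, that the two-sided greedy recursion indeed computes the maximal occupancy $(D)_{v_j}(v_j)$.
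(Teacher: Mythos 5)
Your proposal shares the two key combinatorial ingredients of the paper's proof: pigeonholing the at most $L-1$ chips in the window against the roughly $L^2$ periods to extract a run of about $L$ consecutive chip-free periods, and the observation that a chip count strictly between $0$ and $L$ must lose at least one chip when pushed across a full period $a_1,\dots,a_s$ (since some $a_t$ fails to divide it). But the gap you flag at the end is not a bookkeeping issue to be tightened later --- it is fatal to the two-sided formulation as stated. With a run of only $L$ chip-free periods and a guaranteed decay of one chip per period, the left-gather $A$ is forced to vanish only on the final period of the run and the right-gather $B$ only on the first period, so for $L\ge 2$ the two ``dead regions'' need not intersect and no window vertex of guaranteed occupancy $0$ is produced. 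The hoped-for faster (``roughly geometric'') decay is not established and would require its own nontrivial argument; as written, closing the factor of two would force you to strengthen the hypothesis to something like $\ell+1\ge 2sL^2$, which is not the statement being proved.

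The paper avoids the two-sided problem entirely by running the argument through reduced divisors. It first replaces $D$ by its $v_k$-reduced representative (if that already puts $L$ chips on the window, we are done). Reducedness at $v_k$ guarantees that no legal adjacency move to the right of $v_k$ can send chips leftward, so when one then passes to the $v_r$-reduced divisor (where $v_r$ is the left end of the chip-free run) and finally to the $v_{r+sL-1}$-reduced divisor, every intervening legal move is left-to-right and confined to the run; the chips sitting to the right of the run are locked in place and never contribute. Only the left-gather matters, the at most $L-1$ chips arriving at $v_r$ die within the run, and since the $q$-reduced divisor maximizes the number of chips on $q$ among equivalent effective divisors, a $v_{r+sL-1}$-reduced divisor with no chip on $v_{r+sL-1}$ contradicts positive rank. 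If you want to keep your gather-function language, the same observation is the fix: after $v_k$-reducing, the right-gather at the relevant vertices is trivial, and the one-sided decay of $A$ alone over $L$ periods suffices. Your secondary worry (justifying the greedy recursion via Dhar) is real but minor; the quantitative overlap gap is the genuine missing idea, and as written the proposal is not a complete proof.
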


\begin{proof}
    Without loss of generality, we may assume that $D$ is $v_k$-reduced.  We claim that  $D$ places $L$ chips on $v_k,\ldots,v_{k+\ell}$. Suppose for the sake of contradiction there are fewer chips. Since $\ell+1\geq sL^2$, there must be $sL$ adjacent vertices $v_r,\ldots,v_{r+sL-1}$ with zero chips on them, where $k\leq r<r+sL-1\leq k+\ell$.

    Transform $D$ into $D'$, where $D'$ is $v_{r}$-reduced.  Since $D$ is $v_k$-reduced, all legal adjacency moves in this process are are left-to-right, and take place entirely within $v_k,\ldots,v_{r+sL-1}$.  Once we reach as many chips as possible on $v_r$, we know there are at most $L-1$ chips on $v_r$, since no new chips have been introduced to $v_k,\ldots,v_{k+\ell}$.  Now we transform $D'$ into $D''$, where $D''$ is $v_{r+sL-1}$-reduced.  Similar to before, this can be accomplished through left-to-right legal moves, all within $v_r,\ldots,v_{r+sL-1}$. 
    
    However, as chips are moved across the next $s$ vertices, at least one chip must be left behind, since the number of chips does not divide all of $a_1,\ldots,a_s$.  This occurs repeatedly over the $L$ sets of $s$ vertices making up the consecutive vertices with no chips, meaning that we will have lost all the at most $L-1$ chips by the time we attempt to move chips from $v_{r+sL-2}$ to $v_{r+sL-1}$.  This means that $D''$ is $v_{r+sL-1}$-reduced but has no chips on $v_{r+sL-1}$, a contradiction to $r(D'')>0$.  Thus $D$ places at least $L$ chips on the vertices $v_k,\ldots,v_{k+\ell}$, as desired.
\end{proof}

\begin{corollary}\label{corollary:lcm_equality}  Let $A=(a_1,\ldots,a_s)$, let $L=\textrm{lcm}(a_1,\ldots,a_s)$, and let $n\geq sL^2$.  Then the gonality of the $A$-monoculture banana path on $n+1$ vertices has gonality $L$:
\[\gon(B_{A;n})=\lcm(a_1,\ldots,a_s).\]
\end{corollary}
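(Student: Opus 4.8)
The plan is to prove the two inequalities $\gon(B_{A;n}) \leq L$ and $\gon(B_{A;n}) \geq L$ separately. The upper bound is immediate: Corollary \ref{corollary:upper_bound_monoculture} already gives $\gon(B_{A;n}) \leq \lcm(a_1,\ldots,a_s) = L$, since the $A$-monoculture banana path has edge bunch sizes drawn from $\{a_1,\ldots,a_s\}$. So the entire content of the statement is the matching lower bound $\gon(B_{A;n}) \geq L$, and this is where Lemma \ref{lemma:lcm_string} does the work.

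For the lower bound, I would apply Lemma \ref{lemma:lcm_string} with the string of adjacent vertices taken to be the \emph{entire} vertex set $v_0,\ldots,v_n$. Here $\ell = n$, so $\ell + 1 = n+1$, and the hypothesis $n \geq sL^2$ guarantees $\ell + 1 \geq sL^2$, exactly matching the requirement of the lemma. The edge bunch sizes along the whole path follow the repeating pattern $a_1,\ldots,a_s$ by definition of the monoculture, so the lemma applies. It tells us that for \emph{any} positive rank divisor $D$ on $B_{A;n}$, there is an equivalent effective divisor placing at least $L$ chips on $v_0,\ldots,v_n$ — but these are all the vertices, so that equivalent divisor has degree at least $L$. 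Since linearly equivalent divisors have equal degree, $\deg(D) \geq L$. As this holds for every positive rank divisor, the minimum degree of such a divisor is at least $L$, i.e. $\gon(B_{A;n}) \geq L$.

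Combining the two bounds gives $\gon(B_{A;n}) = L = \lcm(a_1,\ldots,a_s)$, as claimed. I expect essentially no obstacle here, since the corollary is almost immediate from the lemma — the only point requiring a moment's care is the bookkeeping that identifies the full vertex set $\{v_0,\ldots,v_n\}$ with the string in the lemma and checks that the monoculture's edge bunch pattern genuinely matches the repeating pattern $a_1,\ldots,a_s$ across all of $v_0,\ldots,v_n$ (including the fact that the pattern may stop partway through $A$, which does not affect applicability since the lemma only needs the sizes to follow the repeating pattern). The substantive mathematics has already been front-loaded into Lemma \ref{lemma:lcm_string}, whose proof handles the delicate argument that chips are unavoidably stranded when moved across a long run of empty vertices whose edge bunch sizes do not all divide the chip count.
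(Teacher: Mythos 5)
Your proof is correct and follows exactly the paper's argument: the upper bound is quoted from Corollary \ref{corollary:upper_bound_monoculture}, and the lower bound applies Lemma \ref{lemma:lcm_string} to the entire vertex string $v_0,\ldots,v_n$, concluding that every positive rank divisor has an equivalent effective divisor of degree at least $L$. The paper states this more tersely, but the reasoning is the same.
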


\begin{proof}
    By Corollary \ref{corollary:upper_bound_monoculture}, \(\gon(B_{A;n})\leq L\).  For the lower bound, we apply Lemma \ref{lemma:lcm_string} using all vertices $v_0,\ldots,v_{n}$, implying that any positive rank divisor has an equivalent effective divisor placing $L$ chips on those vertices.  Thus $\gon(B_{A;n})\geq L$.
\end{proof}

We now present an example of gonality increasing under taking a subgraph.

\begin{example}\label{example:increase_of_one}
    Consider $B_{(3,3,3)}$.  This graph has gonality $3$, as can be proved with the upper bound from Corollary \ref{corollary:upper_bound_monoculture} and a lower bound from scramble number.  However, if we delete an edge from the middle bunch to obtain $B_{(3,2,3)}$, we claim the gonality becomes $4$.  Indeed, suppose there exists $D$ on $B_{(3,2,3)}$ with $r(D)>0$, $\deg(D)=3$, and $D\geq 0$.  Without loss of generality we can take $D$ to be $v_0$-reduced, so $D(v_0)\geq 1$.  Note that $D(v_0)$ cannot be $3$, because then $D=3\cdot v_0$, but $r(3\cdot v_0)=0$ (for instance, it cannot place a chip on $v_3$).  By minimality, then $D(v_0)=1$.  A similar argument shows that $D_{v_3}(v_3)=1$; but since a solitary chip cannot be moved to $v_3$ through legal adjacency moves, we have $D(v_3)=1$.  Thus $D$ is $v_0+v_i+v_3$ for either $i=1$ or $i=2$; but this divisor does not have positive rank, since it cannot place a chip on $v_j$ with $j\neq i$, $j\in \{1,2\}$.  Having reached a contradiction, we know that $B_{(3,2,3)}$ has gonality at least $4$.  Then note that $r(3v_0+v_3)>0$.
    
    Another way to compute the gonality is to apply Proposition \ref{prop:recursion_step_one} iteratively to $B_{(3,2,3)}$.  The first iteration gives
\begin{align*}
    \gon(B_{(3,2,3)})=&f(B_{(3,2,3)},0)
    \\=&\min\{1+f(B_{(2,3)},0),3+f(B_{(2,3)},3).\}
\end{align*}
Applying the result to $f(B_{2,3},0)$ gives
\[f(B_{(2,3)},0)=\min\{1+f(B_{(3)},0),2+f(B_{(3)},2)\}=\min\{1+2,2+1\}=3,\]
and to $f(B_{(2,3)},3)$ gives
\begin{align*}f(B_{(2,3)},3)=&\min\{f(B_{(3)},2\cdot \lfloor 3/2\rfloor),2-(3\mod 2)+f(B_{(3)},2\cdot\lceil 3/2\rceil)\}
\\=&\min\{f(B_{(3)},2),1+f(B_{(3)},4)\}=\min\{1,1+0\}=1.
\end{align*}
Plugging back in we find
\[\gon(B_{(3,2,3)}=\min\{1+f(B_{(2,3)},0),3+f(B_{(2,3)},3)=\min\{1+3,3+1\}=4.\]
Thus we have an example of a graph $G$ and an edge $e$ such that $\gon(G-e)=\gon(G)+1$.
\end{example}

We now prove Theorem \ref{theorem:peeling_edges}.  The broad idea is to start with a long $(a,b)$-monoculture banana path on $n+1$ vertices with gonality $\lcm(a,b)$, and to delete an edge near the middle.  By carefully choosing $a$, $b$, and $n+1$, we can make it so that each side of the graph needs $\lcm(a,b)$ chips to achieve a positive rank divisor, \emph{and} so that $r$ additional chips are needed near the middle to make up for chips that cannot be transported across the middle edge bunch.

\begin{proof}[Proof of Theorem \ref{theorem:peeling_edges}]     Let \(r\) denote the increase in gonality we want the graph to have by deleting a specific edge. If $r=1$, Example \ref{example:increase_of_one} provides our desired example.  Now let $r\geq 2$.  We pick an integer \(a\) such that
\[        r \leq \left\lfloor\frac{a-1}{2}\right\rfloor\]
with the additional requirements that \(r \nmid a-1\); this is always possible since $r\geq 2$. Using Dirichlet's theorem on primes in arithmetic progressions \cite{dirichlet}, we know that there exist infinitely many primes \(b\) for which \(r \equiv b\, \mathrm{mod}\, (a-1)\). We  pick the first prime $b$ of this infinite sequence such that \(b \nmid a\) and \(b \nmid a-1\).  We may write $b=r+k(a-1)$.

Let $n=4(ab)^2+1$, and consider the $(a,b)$-monoculture banana path \(B_{(a,b);n}\), which has $n+1$ vertices alternating between $a$ and $b$ edges in each bundle. By Corollary \ref{corollary:lcm_equality}, we know that the gonality of this graph is \(\textrm{lcm}(a,b)=ab\).

Now let $G$ be the graph obtained by deleting an edge from the central edge bundle with $a$ edges, decreasing the number from $a$ to $a-1$.  We claim that $\gon(G)=ab+r$.  For a positive rank divisor $D$ of that degree, set $D=ab\cdot v_{2(ab)^2}+r\cdot v_{2(ab)^2+1}$, which places $ab$ chips on the left vertex $v_{2(ab)^2}$ of the central edge bunch, and $r$ chips on the right vertex $v_{2(ab)^2+1}$. Legal adjacency moves let us move $ab$ chips to any vertex to the left of the central bunch.  For vertices to the right, starting from $D$ first move as many chips from the left vertex of the central edge bunch to the right vertex.  Note that $ab\equiv 1\cdot r\equiv r\mod (a-1)$, so $ab$ is $r$ more than a multiple of $a-1$.  We may therefore move $ab-r$ chips to the right vertex, yielding the equivalent divisor $r\cdot v_{2(ab)^2}+(ab-r+r)\cdot v_{2(ab)^2+1}=r\cdot v_{2(ab)^2}+ab\cdot v_{2(ab)^2+1}=$. Since there are $ab$ chips on $v_{2(ab)^2+1}$, we may move chips to cover any vertex on the right side.  Thus $\gon(G)\leq\deg(D)=ab+r$.

Now we show $\gon(G)\geq ab+r$.  Suppose for the sake of contradiction that there exists a positive rank divisor $D$ of degree $ab+r-1$ on $G$.  Since $v_0,\ldots,v_{2(ab)^2}$ satisfy the hypotheses of Lemma \ref{lemma:lcm_string} with $s=2$, We know that $D\sim D'$ for some effective divisor $D'$ that places at least $ab$ chips to the left of the deleted edge.  Similarly, $D\sim D''$ for some effective divisor $D''$ that places at least $ab$ chips to the right  of the deleted edge.  Consider a sequence of legal adjacency moves transforming $D'$ into $D''$.  Among these moves must be adjacency moves moving at least $ab-r+1$ chips across the central edge bunch, 
 since $D'$ places at most $r-1$ to the right of this edge bunch.  Note that $ab-r+1\equiv 1\cdot r-r+1\equiv 1\mod a-1$, and since we must move in increments of $a-1$ we must in fact move at least $ab-r+1+(a-2)=ab+(a-1)-r$ chips through these legal adjacency moves.  Since $r\leq \lfloor (a-1)/2\rfloor$, we know $(a-1)-r>r-1$, so this is strictly more than $ab+r-1$ chips, a contradiction to our divisor having degree $ab+r-1$.  Thus $\gon(G)\geq ab+r$.

 In summary, we have that $\gon(G)=ab+r$  and $\gon(B_{(a,b);n})=ab$, giving us our claimed example.
\end{proof}

\begin{example}
    For a concrete example, suppose we wish to find a drop of $r=2$ when deleting an edge.  We pick an integer $a$ such that $2\leq \left\lfloor\frac{a-1}{2}\right\rfloor$ and $2\nmid a-1$; the smallest possible choice of $a$ is $a=6$.  We then choose $b$ to be the smallest prime such that $b\equiv 2\,\mod\,5$ (making sure that $b\nmid 6$ and $b\nmid 5$), which is $b=7$.  Finally we choose $n=4(ab)^2+1=4(6\cdot 7)^2+1=7057$, so our graph will have $n+1=7058$ vertices.  Thus our banana path is $B_{(6,7);7057}$, and $G$ is the graph obtained by deleting an edge from the middle edge bunch (between vertices $v_{3528}$ and $v_{3529}$), reducing the number of edges in that edge bunch from $6$ to $5$.

\begin{figure}[hbt]
    \centering
    \includegraphics{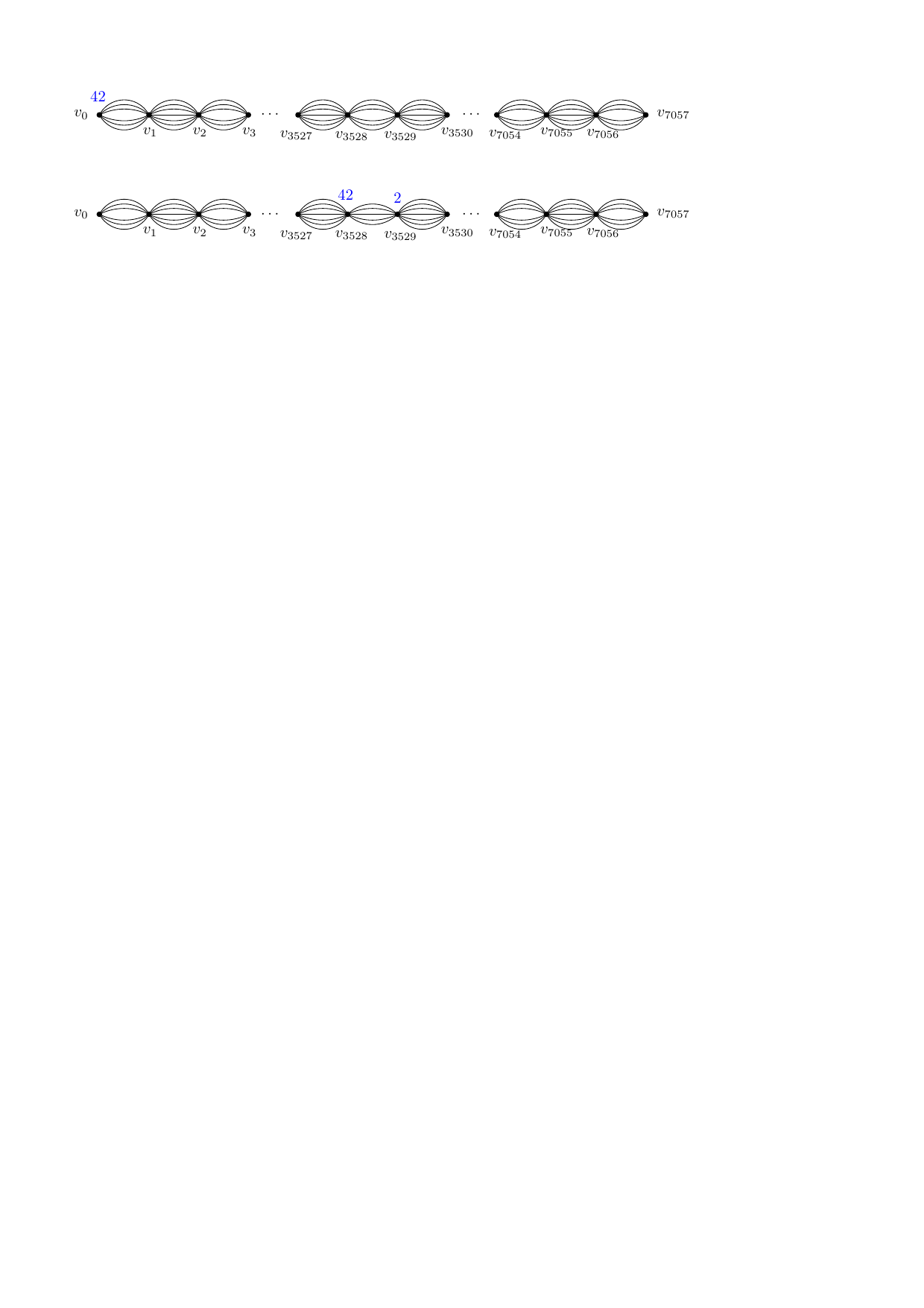}
    \caption{The banana path $B_{(6,7);7057}$, with gonality $42$; and the banana path $G$ obtained from deleting one edge from the middle of the graph, with gonality $44$.}
    \label{figure:example_drop_2}
\end{figure}

    As argued in the previous proof, $\gon(B_{(6,7);7057})=6\cdot 7=42$, and $\gon(G)=6\cdot 7+2=44$.  For the first graph, $42\cdot v_0$ achieves gonality.  A divisor on $G$ achieving gonality is $42v_{3528}+2v_{3529}$.  Note that $42v_{3528}+2v_{3529}\sim 2v_{3528}+42v_{3529}$ since we can perform $8$ legal adjacency moves from $v_{3528}$ to $v_{3529}$. Once there are $42$ chips on the vertex to the left or the right of the central edge bunch, any vertex can receive one (in fact, $42$) chips through legal adjacency moves, since $42$ is a multiple of both $6$ and $7$.  These graphs, and divisors of positive rank and minimum possible degree, are illustrated in Figure \ref{figure:example_drop_2}.
\end{example}

\section{Banana trees and Brill-Noether theory}\label{section:brill}

In  this section we prove that every banana tree satisfies the gonality conjecture, and moreover that only finitely many ripe banana trees achieve the conjectured upper bound of $\left\lfloor \frac{g+3}{2}\right\rfloor$.

\begin{proof}[Proof of Theorem \ref{theorem:brill_noether_bound}]
Since contracting edge bunches of size one affects neither genus nor gonality nor the size of the largest edge bunch (except in the case that $G$ is a tree), we may assume without loss of generality that $G$ is a ripe banana tree.

We prove the claim by strong induction on \(n\), where \(n+1\) is the number of vertices. For \(n=0\), the banana tree has one vertex and gonality equal to \(1\); since this graph has \(g=0\), this matches the upper bound of \(\left\lfloor \frac{g+3}{2}\right\rfloor=\left\lfloor \frac{0+3}{2}\right\rfloor=1\).  
    For \(n=1\), the banana tree has two vertices joined by \(a\) edges, and has genus equal to \(g=a-2+1=a-1\). By our ripeness, assumption, \(a\geq 2\), and so the graph has gonality \(2\), which is at most \(\left\lfloor \frac{g+3}{2}\right\rfloor=\left\lfloor \frac{a+2}{2}\right\rfloor\), with equality precisely when \(a=2\) or \(a=3\).  Thus in our base cases, our upper bound holds, and a necessary condition for equality is that no edge bunch is larger than size $4$.

Now let $n\geq 2$, and assume our claim holds for all banana trees on up to \(n+1-1=n\) vertices.  Let \(G\) be a banana tree on \(n+1\) vertices.  Choose a leaf $v$ of $G$ with unique neighbor $w$, and let $a=|E(v,w)|$.  Let $G'=G-v$.  By the inductive hypothesis, $\gon(G')\leq  \lfloor (g(G')+3)/2\rfloor=(g(G)-(a-1)+3)/2\rfloor=\lfloor(g(G)-a+4)/2\rfloor$. 
Note that $\gon(G)\leq 1+\gon(G')$. If $a\geq 3$, then we have
\[\gon(G)\leq 1+\gon(G')=1+\lfloor(g(G)-a+4)/2\rfloor=\lfloor(g(G)-a+6)/2\rfloor\leq\lfloor(g(G)-3)/2\rfloor. \]
Note that a necessary condition for equality is to have $a\leq 4$; for larger $a$, at least one of the above inequalities is strict.  Similarly, a necessary condition is $\gon(G')=  \lfloor (g(G')+3)/2\rfloor$; by the inductive hypothesis, each edge bunch in $G'$ must have at most $4$ edges.  Thus all edge bunches in $G$ have at most $4$ edges.

Now we deal with the case of $a=2$.  If there exists a divisor $D\geq 0$ of positive rank on $G'$ with $\deg(D)=\gon(G')$ that places $2$ chips on $w$, then $\gon(G)\leq \gon(G')$, as the same divisor has positive rank when considered on $G$.  Since the upper bound holds for $G'$, it holds for $G$ as well, since gonality has not increased but genus has.  Note that equality could only hold for $G$ if it holds for $G'$, and so in this case $G'$ (and thus $G$) must have all edge bunches of size at most $4$ by the inductive hypotehsis.

Now assume there does not exist such a divisor on $G'$.   Let $D\geq 0$ have positive rank on $G'$ with $D$ being $w$-reduced; since $D(w)>0$ and $D(w)< 2$, we know $D(w)=1$.  No chips may ever move from $w$ to its neighbors by the ripeness assumption, which means all neighbors of $w$ have at least one chip.  If any neighbor $u$ of $w$ had $|E(u,w)|=2$, then we could replace $D$ with $D-u+w$ to have an effective positive rank divisor of minimum degree placing $2$ chips on $w$; since no such divisor exists, we know that $|E(u,w)|\geq 3$ for all neighbors $u$ of $w$.

For each neighbor $u$ of $w$, let $B_u$ denote the connected component of $G'-w$; note that $B_u$ is a banana tree. Moreover, since no chips are moved between $w$ and the rest of the graph, we have that $\gon(G')=1+\sum_{u} \gon(B_u)$; and of course $\gon(G)\leq \gon(G')+1$.  By our inductive hypothesis,
\[\gon(G)\leq 1+\gon(G')= 2+\sum_{u} \gon(B_u)\leq 2+\sum_u \lfloor(g(B_u)-3)/2\rfloor \leq1+\left\lfloor\sum_u (g(B_u)+3)/2\right\rfloor.\]
The genus of $G$ is equal to the sum of the genera of the $B_u$ subgraphs, plus the number of edges connecting $w$ to its neighbors, plus the two edges between $v$ and $w$, minus $2$ for the additional two vertices $v$ and $w$; that is,
\[g(G)=1+\sum_{u}g(B_u)+\sum_u |E(u,w)|+2-2\geq 1+\sum_{u}(g(B_u)+3). \]
Thus we have
\[\gon(G)\leq 2+\left\lfloor\sum_u (g(B_u)+3)/2\right\rfloor\leq 2+\lfloor (g(G)-1)/2\rfloor=\lfloor g(G)+3/2\rfloor,\]
giving us the claimed inequality.  Finally, note that in order to have equality, each $B_u$ must have had $\gon(B_u)=\lfloor (g(B_u)+3)/2\rfloor$ for all $u$ implying by induction that each $B_u$ has no edge bunches of size larger than $4$; and each $|E(u,w)|$ must have been equal to $3$ with possibly one of size $4$ (due to the floor function).  Thus to have equality for $G$, a necessary condition is for each edge bunch to have at most $4$ edges.

To prove the final claim,  note that any bridgeless (that is, ripe) banana tree achieving the upper bound has each edge bunch of size $2$, $3$, or $4$.  By Lemma \ref{lemma:easy_upper_bound_gon}, we know $\gon(G)\leq\lcm(2,3,4)=12$.  In order to have $\gon(G)=\lfloor (g+3)/2\rfloor\leq 12$, a necessary condition is $g\leq 22$.  There are only finitely many ripe banana trees with genus at most $22$, completing the proof.
\end{proof}

This result suggests that banana trees, although useful for constructing examples of graphs with certain divisorial properties, do not tend to contain the full richness of Brill-Noether general graphs.  A promising direction for future research would be to look at metric versions of banana trees, or equivalently (by \cite{discrete_metric_different}) subdivisions of banana trees.  Indeed, metric versions of the banana path $B_{2,2,\ldots,2}$ provided the first known family of Brill-Noether general graphs in \cite{tropical_brill}.

\section{Computational results} \label{section:computationalresults}

We end this paper with some computational results on banana graphs. Throughout, we restrict ourselves to banana graphs with no edge bunches of size 1. We find gonalities of all ``small" banana path graphs and check for counterexamples to popular conjectures related to graph gonality.

Given Theorem \ref{theorem:polytime_gonality}, it is possible to efficiently investigate the gonalities of small banana path graphs with the use of computers. In particular, we exhaustively characterize the distribution of first gonalities for banana path graphs on $2$-$8$ vertices. Importantly, for a banana path graph on $n$ vertices, it suffices to study edge bunches of size at most $n$. This is because the gonality of any $n$ vertex graph is at most $n$ and an edge bunch of size $s > n$ partitions the instance into subproblems with edge bunches at most the number of vertices. 

\begin{table}[H]
\caption{Distribution of first gonalities for banana path graphs with $2$-$8$ vertices}
\centering
\begin{tabular}{c|rrrrrrr}
  \hline
  \multirow{2}{*}{Vertices} & \multicolumn{7}{c}{Gonality} \\
   & 2 & 3 & 4 & 5 & 6 & 7 & 8 \\ 
  \hline
  2 &   2 &   0 &   0 &   0 &   0 &   0 &   0 \\ 
  3 &   1 &   5 &   0 &   0 &   0 &   0 &   0 \\ 
  4 &   1 &   6 &  33 &   0 &   0 &   0 &   0 \\ 
  5 &   1 &   9 &  60 & 255 &   0 &   0 &   0 \\ 
  6 &   1 &  13 & 149 & 655 & 3178 &   0 &   0 \\ 
  7 &   1 &  17 & 311 & 1975 & 9803 & 46889 &   0 \\ 
  8 &   1 &  22 & 643 & 5311 & 36634 & 151517 & 856496 \\ 
  \hline
\end{tabular}
\end{table}

\noindent On these instances, we also checked Conjecture \ref{conj: high-order-gonality-conjectures} and and verified that no counterexample exists within the class of small banana path graphs. 

\begin{conjecture}[Question 1.3 of \cite{semigroup_gonality_sequences}] \label{conj: high-order-gonality-conjectures}
Let G be a graph with genus $g$ and the property that $\gon_2(G) = \gon_1(G) + 1$. Then, 
    \begin{enumerate}[label = (\arabic*)]
        \item $g = {\gon_1(G) \choose 2}$
        \item If $\gon_1(G) \geq 2$, then $\gon_3(G) = 2 \cdot \gon_1(G)$
    \end{enumerate}
\end{conjecture}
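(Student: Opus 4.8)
The plan is to prove the conjecture by establishing a graph-theoretic analog of the classical fact that a curve with $\gon_2 = \gon_1 + 1$ behaves like a smooth plane curve. Writing $d = \gon_1(G)$, the hypothesis $\gon_2(G) = d+1$ matches exactly the gonality sequence of a smooth plane curve of degree $d+1$, which has genus $\binom{d}{2}$, first gonality $d$, second gonality $d+1$, and third gonality $2d$; so both claims amount to showing that the purely combinatorial condition $\gon_2 = \gon_1 + 1$ forces the same numerics. Throughout I would work with the Baker--Norine Riemann--Roch theorem \cite{bn}, the canonical divisor $K$ of degree $2g-2$, and the associated Clifford-type inequality $r(E) \le \tfrac{1}{2}\deg(E)$ for special effective divisors $E$.

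For part (1), I would fix an effective divisor $D$ of rank at least $2$ and degree $d+1$, which exists by hypothesis, and note that $D-p$ has rank at least $1$ and degree $d$ for every point $p$, hence is a gonality divisor since $\gon_1 = d$. The upper bound $g \le \binom{d}{2}$ is the analog of the Castelnuovo genus bound for a degree-$(d+1)$ plane model; combinatorially I would attempt it by induction on $d$, controlling how genus accumulates as points are stripped from $D$, or equivalently by bounding $r(mD)$ for small $m$ and comparing with Riemann--Roch. For the matching lower bound $g \ge \binom{d}{2}$, I would exploit that the gonality is \emph{exactly} $d$: if $g$ were smaller, Riemann--Roch applied to $K-D$ should produce either a positive-rank divisor of degree below $d$ or a rank-$2$ divisor of degree below $d+1$, contradicting $\gon_1 = d$ or $\gon_2 = d+1$. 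Pinning the value to precisely $\binom{d}{2}$ from both sides is where the substantive work lies.

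For part (2), assuming $g = \binom{d}{2}$ and $d \ge 2$, I would show $\gon_3(G) \le 2d$ by building a rank-$3$ divisor of degree $2d$. The natural candidate is $2D$ with two points removed: in the plane-curve picture $2H$ is a $g^5_{2d+2}$ and deleting two base points leaves a $g^3_{2d}$. The subtlety is that superadditivity of rank only yields $r(2D) \ge 4$, so the crux is to establish the full estimate $r(2D) \ge 5$ (the combinatorial shadow of the space of conics) and then verify that two points can be deleted while keeping the rank at least $3$. For the reverse inequality $\gon_3(G) \ge 2d$, I would feed the now-known value $g = \binom{d}{2}$ into Riemann--Roch together with a Clifford-type estimate to rule out any rank-$3$ divisor of degree less than $2d$.

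The main obstacle is part (1), and specifically the extraction of the \emph{exact} genus $\binom{d}{2}$: for honest curves this is supplied by the smoothness of the plane model and Castelnuovo theory, but a general graph carries no map to $\mathbb{P}^2$ and no notion of smoothness, so the quadratic growth of $g$ in $d$ must be recovered from divisorial data alone. I expect this to demand a delicate reduced-divisor or lattice-point counting argument, and it is plausible that the statement holds only under further hypotheses in full graph generality, consistent with the present paper verifying it computationally rather than proving it.
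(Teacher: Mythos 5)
There is nothing in the paper for your attempt to match: the statement you are addressing is a \emph{conjecture} (Question 1.3 of the cited work of Fessler--Jensen--Kelsey--Owen), which the authors do not prove. They only report a computational check that no counterexample exists among banana paths on $2$--$8$ vertices, using their $O(n^3)$ gonality algorithm. So the relevant comparison is not between two proofs but between a computation and a proof outline.

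As a proof, your proposal has genuine gaps at exactly the load-bearing points, and you say so yourself. For part (1), the upper bound $g \le \binom{d}{2}$ is ``the analog of the Castelnuovo genus bound,'' but no such bound is known for graphs; the classical argument uses the plane model and the geometry of $\mathbb{P}^2$, neither of which a graph possesses, and you offer only the intention to ``attempt it by induction.'' The lower bound via Riemann--Roch on $K-D$ is also not carried out: Riemann--Roch gives $r(K-D) = r(D) - \deg(D) + g - 1 \ge g - d$, and it is not shown how a value $g < \binom{d}{2}$ would force a divisor violating $\gon_1 = d$ or $\gon_2 = d+1$; Clifford's inequality for graphs (which does hold, by Baker--Norine) is too weak by itself to pin $g$ to a quadratic function of $d$. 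For part (2), superadditivity of rank gives only $r(2D) \ge 2r(D) = 4$, and the needed estimate $r(2D) \ge 5$ is the combinatorial shadow of $h^0(\mathcal{O}(2))=6$ for plane conics --- again a statement with no known graph-theoretic source; you identify it as the crux and leave it open. In short, the proposal is a faithful transcription of why the conjecture is \emph{plausible} (it reproduces the numerics of a smooth plane curve of degree $d+1$), but every step that would distinguish a proof from the heuristic is deferred, and your closing caveat that the statement may fail in full generality is an accurate reflection of its status as an open problem.
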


\noindent The code, results, and statistics can be found \href{https://github.com/KrishSingal/BananaGonality/}{here}.

\bibliographystyle{plain}

\end{document}